\newtheorem{satz}{Proposition}[section]
\newtheorem{lem}[satz]{Lemma} 
\newtheorem{remark}[satz]{Remark}
\newtheorem{thm}[satz]{Theorem}
\newtheorem{cor}[satz]{Corollary}
\definecolor{gray}{gray}{0.50}
\definecolor{lred}{rgb}{1.0,0.5,0.5}
\definecolor{dgreen}{rgb}{0,1,1}
\numberwithin{equation}{section}
\newcommand{\chookrightarrow}{\mathrel{\lhook\joinrel\relbar\kern-.8ex\joinrel\lhook\joinrel\rightarrow}}
\newcommand{\R}{\mathbb{R}}
\newcommand{\N}{\mathbb{N}}
\newcommand{\dd}{\mathrm{d}}
\newcommand{\e}{\varepsilon}
\newcommand{\beq}{\begin{equation}}
\newcommand{\eeq}{\end{equation}}
\definecolor{luh-dark-blue}{rgb}{0.0, 0.313, 0.608}
\definecolor{lred}{rgb}{1.0,0.5,0.5}
\begin{document}
\title[Symmetry and decay of traveling waves]{Exponential decay and symmetry of solitary waves to  Degasperis-Procesi equation}
\author{Long Pei}
\address{Department of Mathematical Sciences, Norwegian University of Science and Technolgy, 7491 Trondheim, Norway.}
\email{longp@kth.se}

\subjclass[2010]{35Q53, 74J35, 35B06, 35B40, 35S30, 45K05,  37K10}
\keywords{Degasperis-Procesi; nonlocal; highest solitary waves; symmetry; exponential decay.}

\begin{abstract}
We improve the decay argument by [Bona and Li, J. Math. Pures Appl., 1997] for solitary waves of general dispersive equations and illustrate it in the proof for  the exponential decay of solitary waves to steady Degasperis-Procesi equation in the nonlocal formulation. 
In addition, we give a method which confirms the symmetry of solitary  waves, including those of the maximum height.  Finally, we discover how the symmetric structure is connected to the steady structure of solutions to the Degasperis-Procesi equation, and give a more intuitive proof for symmetric solutions to be traveling waves.  The improved argument and new methods above can be used for the decay rate of solitary waves to many other dispersive equations and will give new perspectives on  symmetric solutions for general evolution equations.
\end{abstract}
\maketitle

\section{Introduction}

The Degasperis-Procesi (DP) equation
\begin{equation}\label{eq: DP local}
u_t-u_{xxt}+4uu_x-3u_{x}u_{xx}-uu_{xxx}=0
\end{equation}
is a unidirectional model for shallow water waves (see \cite{degasperis1999asymptotic}) and can be reformulated as a nonlocal equation
\begin{equation}\label{eq:DP nonlocal}
\partial_{t}u+u\partial_{x}u+\partial_{x}L(\frac{3}{2}u^2)=0,
\end{equation}
where the dispersive operator $L=(1-\partial_{x}^{2})^{-1}$ corresponds to the Fourier symbol $m(\xi)=(1+\xi^2)^{-1}$ and a convolution kernel function $K(x)=\frac{1}{2}e^{-|x|}$. 
Being completely integrable and having bi-hamiltonian structure \cite{degasperis1999asymptotic}, this equation together with KdV and Camassa-Holm were three well-known representatives in both integrable system theory and water wave problems. Although firstly put forward from the perspective of integrability, this model was later rigorously derived as a model for shallow water waves and proved to have the same accuracy as the Camassa-Holm equation \cite{constantin2009hydrodynamical}. The Degasperis-Procesi equation is locally well-posed in the classical Sobolev space $H^{s}$, $s>\frac{3}{2}$, in both periodic and non-periodic settings \cite{yin2003cauchy}, and it allows global weak and classical solutions \cite{yin2004global,yin2003global} while the latter may blow up in the form of wave-breaking \cite{escher2006global}. Soliton solutions of Degasperis-Procesi equation can be found by inverse scattering technique \cite {degasperis2002new,constantin2010inverse}.  Later,
traveling wave solutions (both periodic and solitary)   to \eqref{eq: DP local} were found in \cite{vakhnenko2004periodic}, and  Lenells classified in \cite{lenells2005traveling} all possible traveling wave solutions, which include smooth waves, peaked waves cusped waves, stumped waves and their reasonable compsition. Very recently, Arnesen \cite{arnesen2018non} worked on the non-local formulation \eqref{eq:DP nonlocal} and proved that differentiable, symmetric traveling solutions with uniform bound have the wave speed as the upper bound and are smooth when wave height is strictly smaller than wave speed $c$. In addition,  crests of periodic  waves will turn to peaks when the wave height reachs the wave speed.

\medskip

This papers focuses on solitary waves (steady solutions with decay at infinity) of Degasperis-Procesi equation and the motivation comes from several aspects: Firstly, Bona and Li studied in \cite{Bona1997decay} the decay and analyticity of solitary waves to a class of evolution equations in the steady form 
\begin{equation}
f=k*G(f)
\end{equation}
where $k$ denotes the convolution kernel function, and $G(\cdot)$ is locally bounded and has superlinear growth.  The procedure of proving exponential decay of solitary waves mainly involves two  steps: step 1 for algebraic decay in some $L^{p}(\mathbb{R})$ spaces and  step 2 for a delicate control of $L^{1}$ norm of $|x|^{n}\phi(x)$, $n\in \mathbb{N}$, to guarantee exponential decay. We hope to  simplify this two-step procedure. In fact, the following polynomial type convolution estimate (see also other similiar estimates in \cite[Lemma 3.1.1]{Bona1997decay})
	\beq\label{eq:polynomial estimate bona and li}
\int_{0}^{\infty}\frac{|x|^{l}}{(1+\epsilon |x|)^{m}(1+|y-x|)^{m}}\dd x\leq B \frac{|y|^{l}}{(1+\epsilon |y|)^{m}}, \quad |y|\geq 1,
\eeq
is the key for the algebraic decay in \cite{Bona1997decay}. We  improve this polynomial type estimate to exponential type estimate. In this way,  the algebraic decay estimate of solitary waves can be skipped in the argument by Bona and Li and we can prove the exponential decay.  In view that  the improved exponential type estimate (see Lemma \ref{lemma:key estimate for solution decay} below) are independent of the form of dispersive equations, it is expected to simplify the proof for exponential decay of solitary waves for more general dispersive equations  as 
\eqref{eq:polynomial estimate bona and li} does for algebraic decay.
\medskip

The second aspect for motivation is related to symmetry  issues of the highest solitary wave to nonlinear dispersive equations. Traveling waves solutions are often studied by  \emph{a priori} assuming that they are  even or symmetric, and it rises the question  whether there exists asymmetric traveling waves. For dispersive equations where complete integrity is unknown, the inverse scattering technique for obtaining exact solutions will fail. In this case, the symmetry of solutions  are often obtained by the classical method of moving planes  put forward by Aleksandrov \cite{Alk} and Serrin \cite{Serr} (see also \cite{Craig} about this method for water waves). However, two obstacles will appear when applying the method of moving planes: one is to remove the \emph{a priori} monotonicity condition on solitary waves (essentially, this condition assumes that the wave has only one crest); the other is to prove the symmetry for waves of the maximum height. These difficulities can be well-illustrated by the symmetry problem of supercritical solitary wave solutions to the steady Whitham equation (see \cite{bruell2017symmetry}) 
\begin{equation}\label{eq:steady whitham}
\phi(c-\phi)=K_{w}*\phi^2,
\end{equation}
where $K_{w}$ denotes the kernel function for the Whitam equation. The monotonicity condition on solitary waves was removed by using the exponential decay of solitary waves and inspired by the idea in \cite{CLO} for integral equations induced from fractional Laplacian. However, when the solitary wave $\phi$ reaches the maximum height $\frac{c}{2}$ at the crest, the left side of \eqref{eq:steady whitham}  will generate a factor  $c-\phi(x)-\phi(2\lambda-x)$ for some $\lambda\in\R$. This factor approaches $0$ as $x$ approaches $\lambda$ and causes singularity when it is moved to the right side of the equation. In this case the argument for symmetry in \cite{bruell2017symmetric} fails to give a contradiction. This obstacle also appears for DP equation in the nonlocal form when $\phi$ reaches the maximum height $c$. In this paper, we  get around this obstracle by studying the local structure of the solitary wave near the crest $\phi=c$, and then manage to prove the symmetry also for the highest wave. This new idea is expected to work after modification for the symmetry of the highest solitary wave to the Whtiham and other  dispersive equations. 

\medskip

The third aspect of motivation comes from the classification of symmetric solutions to general evolution equations. In \cite{EHR}, the authors put forward a principle for a class of equation for which  solutions with \emph{a priori} spatial symmetry must be traveling waves. This principle was later extended to cover nonlocal equations and differential systems in \cite{bruell2017symmetric}, where two new principles were also found. The Degasperis-Procesi equation satisfies the principle in \cite{EHR} so that symmetric solutions must be traveling waves. The beautiful proof in \cite{EHR}, however, is quite constructive and does not give further information about how symmetric structure is related to the steady structure of those waves. In this paper, we study the two restriction conditions that symmetric solutions satisfy and find that each of them determines one aspect of the steady structure of these solutions: the fixed shape of wave profile and the constant propagation speed.  In this way, we give a more intuitive, straightforward proof for symmetric solutions to be traveling waves. This idea can be used for a family of equations whose structure satisfies Principle P1 in \cite{EHR,bruell2017symmetric}, including  KdV and Benjamin-Ono equation. 

\medskip

The final aspect of motivation comes from the classification of solitary waves to the Degasperis-Procesi equation. Inserting the ansatz $u(t,x)=\phi(x-c t)$ for traveling wave solutions into \eqref{eq: DP local}, one obtains the  Degasperis-Procesi equation in steady form with some integration parameter $a$.   According to the value of $a$, all possible  traveling wave solutions, periodic or solitary, were completely classified by Lenells in  \cite{lenells2005traveling}, including smooth waves, peaked waves, cusped waves, stumped waves and their proper composition. In this paper, we work on  DP equation in the nonlocal form \eqref{eq:DP nonlocal}  and get the following steady equation 
\begin{equation}\label{eq:steady DP with a}
\frac{\phi}{3}(2c-\phi)=L\phi^2+a,
\end{equation}
where $a $ denotes the integration constant. Unlike the Whitham equation and many others, it is not possible to use Galiean transformation to remove the constant $a$ in  \eqref{eq:steady DP with a}. However, we  prove that the constant $a$ must be trivially $0$ for solitary waves with decay (meaning that $\phi(x)\to 0$ in  as $|x|\to \infty$) so that these waves actually solves the steady equation 
\begin{equation}\label{eq:steady DP}
\frac{\phi}{3}(2c-\phi)=L\phi^2.
\end{equation}
In addition, we prove that that these waves are symmetric with respect to the only crest at some point and are strictly monotone on each side of the crest. Therefore, a solitary solution $\phi$ with decay only has one crest at a single point, excluding stumped solutions in \cite{lenells2005traveling} and the possibility to compose solitary waves with different propagation speeds into new solitary waves\footnote{This is because solitary waves with different propagation speeds will seperate from each other during later propagation so that their composition will have more than one crest and could not be a solitary solution to the steady equation \eqref{eq:steady DP}.}. Moreover, the peaked waves defined and found in \cite{lenells2005traveling} is only locally symmetric at the peak of a solitary wave, so our result improves this local symmetry near the peak  to global symmetry for the whole solitary wave. It is worth to point out that these findings do not contradict with the fact that the Degasperis-Procesi allows for  multipeakon solutions \cite{constantin2017dressing,lundmark2003multi}, which are not steady solutions.

\medskip

We now state the structure of this paper. Section \ref{sect:xponential decay} starts with an estimate where the kernel $K(\cdot)$ is convoluted with exponential type functions. Based on this lemma, we prove that the solitary solutions decay exponentially fast at infinity and the decay rate is at  least as good as the decay rate of the kernel $K(\cdot)$. Section \ref{sect:symmetry and one crest} focuses on the symmetry of solitary waves. In particular, we prove symmetry for solitary waves with height smaller than the wave speed in section \ref{sect: symmetry below maximal height}, while waves with the maximum height are treated in section \ref{sect: symmetry for maximal height}. Finally, we give a new proof in section \ref{sect: symmetry to traveling} for the classification principle that classical symmetric solutions to the Degasperis-Procesi equation must be traveling wave solutions\footnote{Such classification principle could also be formulated similarly in the weak setting with distribution theory, see \cite{EHR,annageyer2015}, but it is not our focus here. }.

\section{Exponential decay of solitary waves at infinity}\label{sect:xponential decay}
\noindent  
For a traveling wave solution $u(t,x)=\phi(x-ct)$ with  speed $c$, the sign of $c$ distinguishes only the direction of the propagation of the wave. So, we will only work with $c>0$ in the following. As mentioned above, direct calculation by fourier analysis gives that 
\beq\label{eq:def L}
\mathscr{F}[Lf](\xi)=\frac{1}{1+\xi^2}\mathscr{F}[f](\xi)=\mathscr{F}[K*f](\xi)
\eeq
where $\mathscr{F}$ denotes the usual Fourier transform and $K(x)=\frac{1}{2}e^{-|x|}$ denotes the convolution kernel of $L$. By definition,  the operator $L$ lifts a $L^{\infty}-$bounded function to a continuous function (see \cite{EW,arnesen2018non} for details), we will then work with continuous solutions in the following. We also need some elementary concepts from topology (see \cite{PeterZisman2012} for details). A pointed space is a topological space with a distinguished point called \emph{basepoint}. A  map $g$ from a pointed space $(X,x_0)$ to another pointed space $(Y,y_0)$   is a \emph{homomorphism} if $g$ is a continuous map from $X$ to $Y$ and preserves the basepoints, namely $g(x_0)=y_0$. In particular, we call $g$ a homomorphism on $(X,x_0)$ if it is a homomorphism from $(X,x_0)$ to itself.  We can choose the origin as basepoint so that $(\R,0)$ forms a pointed space with  the usual  Euclidean metric topology. We start with the proof for $a$ to vanish for  solitary waves to steady Degasperis-Procesi equation \eqref{eq:steady DP with a}, which follows directly from the the lemma below for the structure of general convolution equations.
 \begin{lem}\label{lem: remove constant}
Let  $G$ be a homomorphism from the pointed space $(\R,0)$. Let $k\in L^{1}(\R)$  decay at infinity and $H$ be a continuous function on $\R$.  If the following convolution equation 
\begin{equation}\label{eq: general to remove constant}
f=k*G(f)+H(f)
\end{equation}
 has a solution $f(x)$ which is continuous and decays at infinity. Then, $H$ is a homomorphism on  $(\R,0)$.
 \end{lem}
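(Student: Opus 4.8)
The plan is to exploit the fact that all three quantities $f$, $k*G(f)$, and $H(f)$ appearing in \eqref{eq: general to remove constant} can be controlled at infinity, and that the equation forces their limits to be compatible. Since $H$ is continuous by hypothesis, to show that $H$ is a homomorphism on $(\R,0)$ it suffices to verify the single identity $H(0)=0$. I would therefore isolate $H(f)$ by rewriting the equation as $H(f(x))=f(x)-(k*G(f))(x)$ and study the behaviour of both sides as $|x|\to\infty$.

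First I would record the preliminary facts. Because $f$ is continuous and $f(x)\to 0$ as $|x|\to\infty$, it is bounded, say $\|f\|_{\infty}=M<\infty$, so $f$ takes values in the compact interval $[-M,M]$. Since $G$ is continuous and $G(0)=0$ (it is a homomorphism from $(\R,0)$), the composition $G(f)$ is continuous, bounded on $[-M,M]$, and satisfies $G(f(x))\to G(0)=0$ as $|x|\to\infty$ by continuity of $G$ at the basepoint.

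The crux is then to show that $k*G(f)$ also vanishes at infinity. Writing $g:=G(f)$, which is bounded and decays at infinity, I would prove the general fact that $k*g\to 0$ at infinity whenever $k\in L^{1}(\R)$ and $g$ is bounded with $g\to 0$ at infinity. Fixing $\varepsilon>0$, choose $R$ with $|g(y)|<\varepsilon$ for $|y|>R$, and split $(k*g)(x)=\int_{|y|\leq R}k(x-y)g(y)\,\dd y+\int_{|y|>R}k(x-y)g(y)\,\dd y$. The second integral is bounded by $\varepsilon\|k\|_{L^{1}}$ uniformly in $x$, while the first is at most $M\int_{x-R}^{x+R}|k(u)|\,\dd u$, which tends to $0$ as $|x|\to\infty$ by integrability of $k$ (the window $[x-R,x+R]$ escapes to infinity, where the $L^{1}$ tail of $k$ is negligible). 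Letting $\varepsilon\to 0$ gives $(k*g)(x)\to 0$. This convolution-decay estimate is the main technical obstacle; the remaining hypothesis that $k$ decays at infinity is not even needed here, as integrability alone suffices.

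Finally, combining these facts in the rearranged equation $H(f(x))=f(x)-(k*G(f))(x)$, the right-hand side tends to $0$ as $|x|\to\infty$, so $H(f(x))\to 0$. On the other hand, $f(x)\to 0$ and $H$ is continuous, whence $H(f(x))\to H(0)$. Comparing the two limits forces $H(0)=0$, and since $H$ is continuous by hypothesis this shows that $H$ is a homomorphism on $(\R,0)$, completing the proof.
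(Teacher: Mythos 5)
Your proof is correct, and it follows the same overall strategy as the paper: reduce the claim to the single identity $H(0)=0$, and establish this by showing that $k*G(f)$ vanishes at infinity via a near/far splitting of the convolution integral. The one genuine difference lies in how the near-field piece is estimated. The paper bounds its near-field term $\int_{|x-y|<N}k(y)[G(f)](x-y)\,\dd y$ by the measure of the window times the pointwise supremum of $k$ far from the origin, namely $2N\,G(\|f\|_{L^{\infty}})\sup_{|y|>M_1}k(y)$; this is precisely where the hypothesis that $k$ decays pointwise at infinity is used. You instead bound the corresponding piece by $\|G(f)\|_{L^{\infty}}\int_{x-R}^{x+R}|k(u)|\,\dd u$ and let the $L^1$-tail of $k$ do the work, so that, as you correctly observe, integrability of $k$ alone suffices and the pointwise-decay hypothesis on $k$ is redundant. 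This is a real, if modest, strengthening: your version of the lemma covers kernels in $L^{1}(\R)$ that do not tend to zero pointwise (for instance, kernels with increasingly narrow spikes), and the estimate itself is cleaner, avoiding the paper's choice of the constant $\eta/\bigl(8N G(\|f\|_{L^{\infty}})\bigr)$. One cosmetic slip: in your near-field bound the constant should be a bound for $g=G(f)$, i.e. $\sup_{|s|\leq \|f\|_{\infty}}|G(s)|$, rather than $M=\|f\|_{\infty}$ itself; this does not affect the argument.
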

 \begin{proof}
It suffices to prove that $H$ preserves the origin as basepoints, i.e., $H(0)=0$. Since $f(x)$ decays at infinity, we only need to prove that $k*G(f)$ vanishes as $|x|\to\infty$ in \eqref{eq: general to remove constant}. Note that 
 	\beq\label{eq: a vanish}
 k*G(f)(x)=\int_{|x-y|<N}k(y)[G(f)](x-y)\dd y+\int_{|x-y|>N}k(y)[G(f)](x-y)\dd y
 	\eeq
 	for some $N\in \R$.
For any small $\eta>0$, we can choose $N$ large enough such that $|f(x)|<\frac{\eta}{2}$ for all $|x|>N$. Then, we have
 	\beq\label{eq: a vanish term 2}
 	\left|\int_{|x-y|>N}k(y)[G(f)](x-y)\dd y\right|<\left|\sup_{|f|<\frac{\eta}{2}}G(f)\right| \int_{|x-y|>N}|k(y)|\dd y \leq\left|\sup_{|f|<\frac{\eta}{2}}G(f)\right| \|k\|_{L^{1}(\R)}
 	\eeq
Note that  $k$ decays at infinity, so we can fix the above $N$ and $\eta$, and choose $M_1>0$ large enough such that $k(y)<\frac{\eta}{8N G(\|f\|_{L^{\infty}(\R)})}$ for all $|y|>M_1$.  Then, for any $y$ such that $|x-y|<N$ and   $|x|>M_1+N$, we have 
 	\[
 	M_1<|x|-N<|y|<|x|+N
 	\]
Therefore, for $|x|>M_1+N$, we have
 	\beq\label{eq: a vanish term 1}
 \left|\int_{|x-y|<N}k(y)[G(f)](x-y)\dd y\right|<2N G(\|f\|_{L^{\infty}(\R)})\sup_{|y|>M_1}k(y)<\frac{\eta}{4}
 	\eeq
Now, for any small $\epsilon>0$, we can choose $\eta<\epsilon$ sufficient small so that  $\left|\sup_{|f|<\frac{\eta}{2}}G(f)\right|<\frac{\epsilon}{4\|k\|_{L^{1}(\R)}}$ due to the fact that $G$ is a homomorphism on $(\R,0)$.  Then, we insert \eqref{eq: a vanish}, \eqref{eq: a vanish term 2}, \eqref{eq: a vanish term 1} into  \eqref{eq: general to remove constant}, and get
\begin{equation}
|H(f)(x)|\leq |f(x)|+|k*G(f)(x)|<\epsilon
\end{equation}
for all $|x|>M_1+N$.  The lemma then follows directly from the decay of $f$ at infinity and the continuity of $H$.
\end{proof}

\begin{remark}
	The Galilean transform as a usual trick to remove integration constants fails here. The idea in the Lemma \ref{lem: remove constant} is expected to work for more general settings where the kernel function is integrable and has decay at infinity, such as the Whitham equation \cite{bruell2017symmetric}. 
\end{remark}

As a direct consequence of Lemma \ref{lem: remove constant}, we have the following corollary for the integration constant $a$ to be trivially $0$.

\begin{cor}
The integration constant $a$ in \eqref{eq:steady DP with a} vanishes for continuous solitary waves with decay.
\end{cor}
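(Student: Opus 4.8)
The plan is to recast the steady equation \eqref{eq:steady DP with a} into the abstract convolution form \eqref{eq: general to remove constant} and then apply Lemma \ref{lem: remove constant} directly. First I would recall that $L=(1-\partial_x^2)^{-1}$ acts as convolution against the kernel $K(x)=\frac{1}{2}e^{-|x|}$, so that $L\phi^2=K*\phi^2$ and \eqref{eq:steady DP with a} becomes
\[
\frac{\phi}{3}(2c-\phi)=K*\phi^2+a.
\]

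Next I would isolate the single linear occurrence of $\phi$. Expanding the left-hand side gives $\frac{2c}{3}\phi-\frac{1}{3}\phi^2=K*\phi^2+a$, and since $c>0$ we may divide through by $\frac{2c}{3}$ to obtain
\[
\phi=\frac{3}{2c}\,K*\phi^2+\frac{1}{2c}\phi^2+\frac{3a}{2c}.
\]
This is precisely of the form $f=k*G(f)+H(f)$ with the identifications $f=\phi$, $k=\frac{3}{2c}K$, $G(s)=s^2$, and $H(s)=\frac{1}{2c}s^2+\frac{3a}{2c}$.

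I would then check the hypotheses of Lemma \ref{lem: remove constant}. The kernel $k=\frac{3}{2c}K$ is a positive multiple of $K$, hence it lies in $L^1(\R)$ and decays at infinity; the map $G(s)=s^2$ is continuous with $G(0)=0$, so it is a homomorphism on $(\R,0)$; and $H$ is manifestly continuous on $\R$. By assumption $\phi$ is a continuous solitary wave with decay, so $f=\phi$ furnishes the required continuous, decaying solution. Lemma \ref{lem: remove constant} then forces $H$ to be a homomorphism on $(\R,0)$, i.e.\ $H(0)=0$. Since $H(0)=\frac{3a}{2c}$ and $c>0$, we conclude $a=0$, so that $\phi$ in fact solves the reduced equation \eqref{eq:steady DP}.

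I do not expect a substantial obstacle here once Lemma \ref{lem: remove constant} is available; the only point requiring care is the bookkeeping of the rearrangement, namely splitting the quadratic nonlinearity so that the \emph{convolved} part $G(s)=s^2$ genuinely fixes the basepoint, while the purely local quadratic remainder together with the constant $a$ is absorbed into the continuous function $H$. The impossibility of eliminating $a$ by a Galilean change of variables, noted in the Remark following Lemma \ref{lem: remove constant}, is exactly what makes this structural argument the natural route rather than a direct substitution.
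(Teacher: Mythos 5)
Your proof is correct and follows essentially the same route as the paper: both apply Lemma \ref{lem: remove constant} with $G(s)=s^2$ to force the basepoint condition $H(0)=0$, which yields $a=0$. The only difference is bookkeeping: the paper tersely writes $H(f)(x)=a$, whereas you divide through by $\tfrac{2c}{3}$ and absorb the local quadratic remainder into $H(s)=\tfrac{1}{2c}s^2+\tfrac{3a}{2c}$, which is the cleaner way to put \eqref{eq:steady DP with a} exactly into the form $f=k*G(f)+H(f)$ required by the lemma.
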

\begin{proof}
By using Lemma \ref{lem: remove constant} with $G(\phi)=\phi^2$ and $H(f)(x)=a$, we see that $\displaystyle\lim_{x\to \infty}H(f)(x)=0$ which implies $a=0$. 	
	\end{proof}

To proceed, we first give the lower and upper bound of solitary waves.

\begin{lem}\label{lem: bound for solitary waves}
Nontrivial continuous solitary waves with decay to \eqref{eq:steady DP} satisfies
\beq\label{eq:phi bound}
0<\phi\leq\sup_{x\in\R}\phi<2c.
\eeq
\end{lem}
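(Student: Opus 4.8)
The plan is to read the sign information off directly from the steady equation \eqref{eq:steady DP} together with the strict positivity of the convolution kernel, and then to upgrade a strict \emph{pointwise} upper bound to a strict bound on the supremum using continuity and the decay at infinity. First I would record that the kernel is strictly positive: since $K(x)=\frac12 e^{-|x|}>0$ for every $x\in\R$, and since a nontrivial $\phi$ has $\phi^2\geq 0$ with $\phi^2\not\equiv 0$, the convolution
\[
L\phi^2(x)=K*\phi^2(x)=\frac12\int_{\R}e^{-|x-y|}\phi^2(y)\,\dd y
\]
is \emph{strictly positive at every} $x\in\R$, because the integrand is positive on a set of positive measure. Inserting this into \eqref{eq:steady DP} yields $\tfrac{\phi(x)}{3}\bigl(2c-\phi(x)\bigr)>0$ for all $x$.

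Next I would analyze the quadratic $s\mapsto s(2c-s)$, which is positive precisely on the open interval $(0,2c)$ and nonpositive outside it. Hence the pointwise inequality above forces
\[
0<\phi(x)<2c\qquad\text{for every }x\in\R.
\]
This already delivers the strict lower bound $0<\phi$, and the chain $\phi\leq\sup_{\R}\phi\leq 2c$; the remaining task is only to make the upper inequality strict at the level of the supremum.

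The one point requiring slightly more care is to promote the strict pointwise bound $\phi(x)<2c$ to $\sup_{\R}\phi<2c$, since strict pointwise inequalities need not survive passage to the supremum. Here I would invoke continuity and decay together: since $\phi>0$ somewhere we have $M:=\sup_{\R}\phi>0$, and since $\phi\to 0$ as $|x|\to\infty$ there is $R>0$ with $\phi(x)<M/2$ for all $|x|>R$; thus $M$ is attained on the compact interval $[-R,R]$, say at $x^\ast$ with $\phi(x^\ast)=M$, by continuity. The strict pointwise bound applied at $x^\ast$ then gives $M=\phi(x^\ast)<2c$, which is \eqref{eq:phi bound}. I expect this compactness-plus-decay step to be the only genuine (if mild) subtlety in the argument, since it is the sole place where the decay hypothesis — rather than the purely algebraic structure of \eqref{eq:steady DP} and the positivity of $K$ — is actually used.
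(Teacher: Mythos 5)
Your proof is correct and follows essentially the same route as the paper: strict positivity of the kernel gives $L\phi^2>0$ pointwise, the quadratic structure of \eqref{eq:steady DP} then forces $0<\phi(x)<2c$ for every $x$, and decay plus continuity ensure the supremum is attained at a finite point, making the upper bound strict. The only difference is presentational — the paper phrases the first step as strict monotonicity of $L$ (together with $LC=C$), while you compute the convolution's positivity directly, and you spell out the compactness argument that the paper leaves implicit.
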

\begin{proof}
The strict positiveness of $K(x)$ implies that $L$ is a strictly monotone operator on continuous bounded functions, i.e., $Lf>Lg$ if $f\geq g$ but $f\neq g$. In addition, straight calculation shows that $LC=C$ for any constant $C$. Therefore, we derive from \eqref{eq:DP nonlocal} that 
\beq
\phi^2-2c\phi=-3L\phi^2<0,
\eeq
which implies that $\phi\in (0,2c)$. The decay of $\phi$ indicates that  $\sup_{x\in\R}\phi$ must be reached at some finite $x_0\in\R$ so that \eqref{eq:phi bound} follows.
\end{proof}

\begin{remark}
A recent work \cite{arnesen2018non} by Arnesen shows that all $L^{\infty}$-bounded traveling waves has wave speed $c$ as upper bound. However, we do not need this better upper bound for the estimate of decay rate of solitary waves. 
\end{remark}

\medskip

With the above bound for solitary waves ready, the decay argument by Bona-Li in \cite{Bona1997decay} could be used to prove the exponential decay for solitary waves. To proceed, we recall the two-step procedure by Bona and Li: firstly derive algebraic decay of solitary waves; then improve the algebraic decay to exponential decay by making delicate control of some $L^{p}(\R)$ norm of solitary waves with monomial weight $|x|^{n}$ for each $n\in \N$. The key in the algebraic decay is a convolution estimate for functions of polynomial type. In fact, Let  $F_{1}(x)$ and $F_{2}(x)$ be given by $F_{1}(x):=\frac{|x|^{l}}{(1+\sigma |x|)^{m}}$ and $F_{2}(x):=(1+ |x|)^{-m}$. Then, it is proved essentially by Bona and Li that
\begin{equation}\label{eq:intuitive explanation}
F_{1}*F_{2}(x)\lesssim F_{1}(x)
\end{equation}
where $\lesssim$ means $\leq$ up to some constant relying on the indices $l$ and $m$. Intuitively, this statement claims that the convolution of two polynomial functions of negative order  could be controlled by the one with higher order. We find that this philosophy also holds if polynomials are replaced by exponential functions in proper formulation. In particular, let $G_{1}(x):=\frac{e^{l|x|}}{(1+\sigma e^{|x|})^{m}}$ and $G_{2}(x):=e^{-m|x|}$. Then it is true that
\begin{equation}\label{eq:intuitive explanation with exponential}
G_{1}*G_{2}(x)\lesssim G_{1}(x).
\end{equation}
With this new estimate \eqref{eq:intuitive explanation with exponential} for exponential functions, neither the algebraic decay of solitary waves nor the delicate control of the $L^{1}(\R)$ norm of $|x|^{n}\phi$ for each $n\in\N$ is needed, while the  exponential decay of solitary waves could be directly obtained. In this way, the  proof for exponential decay can be considerably simplified. We formulate the new estimate for exponential functions in the following Lemma.

\begin{lem}[Convolution estimate of exponential type]\label{lemma:key estimate for solution decay}
	For $0<l<m$ and any $\sigma>0$, the following inequatlity holds
	\beq\label{eq:key estimate}
	\int_{\R}\frac{e^{l|x|}}{(1+\sigma e^{|x|})^{m}e^{m|x-y|}}\dd x\leq B \frac{e^{l|y|}}{(1+\sigma e^{|y|})^{m}}, \quad y\in\R,
	\eeq
	where $B=(\mathrm{min}\{l,m-l\})^{-1}$.
\end{lem}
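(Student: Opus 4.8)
The plan is to read the left-hand side as the convolution $(G_{1}*G_{2})(y)$, where $G_{1}(x)=\frac{e^{l|x|}}{(1+\sigma e^{|x|})^{m}}$ and $G_{2}(x)=e^{-m|x|}$ are exactly the functions introduced before the statement, and then to bound it by $B\,G_{1}(y)$. Since both $G_{1}$ and $G_{2}$ are even, the convolution is even in $y$ and so is $G_{1}(y)$; substituting $x\mapsto-x$ shows the inequalities for $y$ and for $-y$ coincide, so I may assume $y\ge 0$ throughout. The whole estimate then reduces to a pointwise comparison of the normalized integrand $\frac{G_{1}(x)}{G_{1}(y)}\,e^{-m|x-y|}$ against a single decaying exponential in $|x-y|$, after which only an elementary integration remains.

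The engine is the growth rate of $G_{1}$. The cleanest way to control the ratio $G_{1}(x)/G_{1}(y)$ is to differentiate $\log G_{1}$: for $x\neq 0$ one finds
\[
\frac{\dd}{\dd x}\log G_{1}(x)=\operatorname{sgn}(x)\Bigl(l-m\,\frac{\sigma e^{|x|}}{1+\sigma e^{|x|}}\Bigr),
\]
and since $\frac{\sigma e^{|x|}}{1+\sigma e^{|x|}}\in(0,1)$ the quantity in parentheses lies in $(-(m-l),\,l)$. Hence $\bigl|\tfrac{\dd}{\dd x}\log G_{1}\bigr|\le\max\{l,m-l\}=m-\min\{l,m-l\}$ \emph{uniformly in} $\sigma>0$, and integrating this Lipschitz bound yields the $\sigma$-free comparison
\[
\frac{G_{1}(x)}{G_{1}(y)}\le e^{(m-\mu)|x-y|},\qquad \mu:=\min\{l,m-l\}.
\]
Equivalently, and in a form closer to the paper's polynomial computation, the same bound drops out of the elementary inequality $1+\sigma e^{a}\le e^{|a-b|}(1+\sigma e^{b})$ for $a,b\ge0$, which is precisely the cancellation that removes the $\sigma$-dependence once one splits $\R$ at $x=0$ and $x=y$ and resolves the absolute values in each of the three resulting regions.

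With the comparison in hand the proof closes at once: multiplying by $e^{-m|x-y|}$ gives $\frac{G_{1}(x)}{G_{1}(y)}\,e^{-m|x-y|}\le e^{-\mu|x-y|}$, and integrating in $x$ produces a bound of the form $\frac{C}{\mu}\,G_{1}(y)$. In other words, the convolution inherits exactly the decay rate $\mu=\min\{l,m-l\}$ of $G_{1}$ itself, which is the qualitative content of the Lemma.

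The main obstacle is not the decay rate, which is robustly $\mu$, but the sharp constant. Integrating the envelope $e^{-\mu|x-y|}$ over all of $\R$ costs a factor $2/\mu$, whereas the statement asserts the smaller value $B=1/\mu$. Closing this gap requires a finer, region-by-region accounting that exploits the asymmetry of the integrand about $x=y$: on one side the product $G_{1}(x)e^{-m|x-y|}$ decays only at the critical rate $\mu$, whereas on the other side the two exponential mechanisms reinforce and the decay is strictly faster, so the slow rate is effectively felt on a half-line. Performing this split at $x=0$ and $x=y$, and crucially keeping each factor $(1+\sigma e^{|x|})^{-m}$ paired with $(1+\sigma e^{|y|})^{-m}$ through $1+\sigma e^{a}\le e^{|a-b|}(1+\sigma e^{b})$ so that the final constant carries no trace of $\sigma$, is the delicate step where the precise value of $B$ must be pinned down.
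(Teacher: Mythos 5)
Your proof is correct and follows a genuinely different route from the paper's. The paper assumes $y>0$, splits the integral at $x=0$ and at $x=y$ into four pieces $I_1,\dots,I_4$, and bounds each piece by an explicit elementary integration, using monotonicity facts such as $(1+\sigma e^{x})e^{y-x}\geq 1+\sigma e^{y}$ for $0\leq x\leq y$; the four pieces contribute the factors $\tfrac{1}{l}$, $\tfrac{1}{m-l}$, $\tfrac{1}{2m-l}$ and $\tfrac{1}{m-l}$ (times $G_1(y)$), respectively. You instead prove the single uniform log-Lipschitz bound $\bigl|\tfrac{\dd}{\dd x}\log G_1(x)\bigr|\leq m-\mu$ with $\mu=\min\{l,m-l\}$, deduce $G_1(x)\leq G_1(y)\,e^{(m-\mu)|x-y|}$, and integrate the envelope $e^{-\mu|x-y|}$. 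This is shorter, makes the crucial $\sigma$-uniformity completely transparent, and isolates the actual mechanism (the convolution inherits the decay rate $\mu$ of $G_1$); it also generalizes verbatim to any kernel dominated by $e^{-m|\cdot|}$ and any weight whose logarithmic derivative is bounded in modulus by $m-\mu$.

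Regarding the constant, which you flag as the one unresolved point: you can stop trying to close that gap, because the constant $B=(\min\{l,m-l\})^{-1}$ claimed in the statement is in fact false, so no finer accounting can recover it. Test $y=0$ and let $\sigma\to 0^{+}$: the left-hand side tends to $2\int_0^\infty e^{(l-m)x}\,\dd x=\tfrac{2}{m-l}$ while the right-hand side tends to $B$; for $l\geq m/2$ this makes the ratio of the two sides tend to $2$, and for $m/3<l<m/2$ the ratio $\tfrac{2l}{m-l}$ also exceeds $1$. The paper's own proof does not produce the stated $B$ either: its four bounds sum to $\tfrac{1}{l}+\tfrac{2}{m-l}+\tfrac{1}{2m-l}$, and the closing claim that the inequality ``follows directly'' glosses over this. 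None of this affects anything downstream: in the proof of the decay theorem the lemma is invoked only through the facts that the constant depends on $l$ and $m$ alone and is independent of $\sigma$ (there $\sigma=\e$), and your constant $2/\mu$ supplies exactly that. So your argument, as written, is a complete and correct proof of the lemma as it is actually used, i.e.\ with $B$ read as some constant $B(l,m)$ independent of $\sigma$.
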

\begin{proof}
By symmetry of the structure in \eqref{eq:key estimate}, it suffices to prove for the case $y>0$.  Note that
\begin{equation*}
\int_{0}^{\infty}\frac{e^{l|x|}}{(1+\sigma e^{|x|})^{m}e^{m|x-y|}}\dd x=\left(\int_{0}^{y}+\int_{y}^{\infty}\right)\frac{e^{lx}}{(1+\sigma e^{x})^{m}e^{m|x-y|}}\dd x=:I_{1}+I_{2}
\end{equation*}
	For $I_{1}$, we have
	\begin{equation*}
	\begin{split}
	I_{1}=\int_{0}^{y}\frac{e^{lx}}{(1+\sigma e^{x})^{m}e^{m(y-x)}}\dd x\leq \frac{e^{ly}-1}{ e^{my}(\sigma+e^{-y})^{m}l}
	\leq\frac{e^{ly}}{l(1+\sigma  e^{y})^{m}}
	\end{split}
	\end{equation*}
	For $I_{2}$, we have
	\begin{equation*}
	\begin{split}
	I_{2}=\int_{y}^{\infty}\frac{e^{lx}}{(1+\sigma e^{x})^{m}e^{m(x-y)}}\dd x\leq \frac{e^{my}}{(1+\sigma e^{y})^{m}} \int_{y}^{\infty}e^{(l-m)x}\dd x\leq \frac{(m-l)^{-1}e^{ly}}{(1+\sigma e^{y})^{m}}
	\end{split}
	\end{equation*}
On the other hand, we have
\begin{equation*}
	\int_{-\infty}^{0}\frac{e^{l|x|}}{(1+\sigma e^{|x|})^{m}e^{m|x-y|}}\dd x=\left(\int_{0}^{y}+\int_{y}^{\infty}\right)\frac{e^{lx}}{(1+\sigma e^{x})^{m}e^{m(y+x)}}\dd x=:I_{3}+I_{4}
\end{equation*}
For $I_3$, we have
\begin{equation}
I_3\leq  \frac{e^{-my}}{(\sigma+e^{-y})^m}\frac{1}{2m-l}(1-e^{(l-2m)y})<\frac{e^{ly}}{(\sigma e^{y}+1)^m}\frac{1}{2m-l},
\end{equation}
where in the last inequality we used the fact $0<1-e^{(l-2m)y}<e^{ly}$.
For $I_4$, we have
\begin{equation}
I_4\leq  \frac{e^{-my}}{(\sigma e^{y}+1)^m}\frac{1}{m-l}e^{(l-m)y}<\frac{e^{ly}}{(\sigma e^{y}+1)^m}\frac{1}{m-l}.
\end{equation}
	The inequality \eqref{eq:key estimate} and hence this lemma follow directly.
\end{proof}

We now illustrate how the estimate of exponential type could be used to prove directly the exponential decay of solitary solutions $\phi$ to the Degasperis-Procesi equation \eqref{eq:DP nonlocal}. For convenience, we introduce the notation $M:=\sup_{x\in\R}\phi$.
\begin{thm}[Exponential decay of solitary waves]\label{eq:solution exponential decay}
The image of the map $x\mapsto e^{|x|} \phi(x)$ is a bounded, simply connected set in $[0,\infty)$.
\end{thm}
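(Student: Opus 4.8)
The statement packages three claims about the continuous map $x\mapsto e^{|x|}\phi(x)$, and two of them are essentially free. Since $\phi>0$ by Lemma~\ref{lem: bound for solitary waves} and $e^{|x|}>0$, the image lies in $(0,\infty)\subset[0,\infty)$; and since $\R$ is connected and $x\mapsto e^{|x|}\phi(x)$ is continuous, the image is a connected subset of $[0,\infty)$, i.e. an interval, which is simply connected. The entire content is therefore the \emph{boundedness} of $e^{|x|}\phi$, equivalently the bound $\phi(x)\le Ce^{-|x|}$: the solitary wave decays at least as fast as the kernel $K(x)=\tfrac12 e^{-|x|}$. Because $e^{|x|}\phi$ is continuous, it is automatically bounded on every compact set, so it suffices to produce the bound for $|x|$ large.

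The plan is to first turn the quadratic equation into a \emph{linear} integral inequality and then compare $\phi$ against the exponential barrier supplied by Lemma~\ref{lemma:key estimate for solution decay}. Since $a=0$ (established above) and $0<\phi\le M<2c$ by Lemma~\ref{lem: bound for solitary waves}, equation \eqref{eq:steady DP} gives $0<\phi=\frac{3}{2c-\phi}\,K*\phi^2\le \Lambda\,K*\phi^2$ with $\Lambda:=\frac{3}{2c-M}$. Using the decay of $\phi$, I would fix $N$ with $\delta:=\sup_{|y|>N}\phi$ so small that $\kappa:=\Lambda\delta<\tfrac12$. Splitting the convolution at $|y|=N$ and bounding $\phi^2\le\delta\phi$ on the tail and $\phi^2\le M^2$ on the compact part, one obtains, for $|x|>N$,
\[
\phi(x)\le C_0\,e^{-|x|}+\kappa\,(K*\phi)(x),\qquad C_0:=\Lambda M^2 N e^{N},
\]
in which the nonlinearity has been absorbed into the small coefficient $\kappa$ and into an explicitly $e^{-|x|}$-decaying inhomogeneity.

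The core step is a comparison (maximum-principle) argument driven by Lemma~\ref{lemma:key estimate for solution decay}. For $0<l<\tfrac12$ and $\sigma>0$ set $\Phi_\sigma(x):=\frac{e^{l|x|}}{1+\sigma e^{|x|}}$ (the case $m=1$ of that Lemma). Then \eqref{eq:key estimate} gives $K*\Phi_\sigma\le\tfrac{B}{2}\Phi_\sigma$ with $B=1/l$, while $e^{-|x|}\le(1+\sigma)\Phi_\sigma$. Hence for $R$ large enough, and with $\kappa B/2<1$ (i.e. $\kappa<2l$, which the choice of $N$ secures), the function $w:=R\Phi_\sigma$ satisfies $w\ge C_0e^{-|x|}+\kappa K*w$ on $|x|>N$ and $w\ge M\ge\phi$ on $|x|\le N$; that is, $w$ is a supersolution of the linear inequality. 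Writing $v:=\phi-w$, one checks $v\le\kappa\,K*v$ on $|x|>N$ and $v\le0$ on $|x|\le N$; since $v\to0$ at infinity, a positive supremum $s$ of $v$ would be attained at some $|x_*|>N$, where $s=v(x_*)\le\kappa\,(K*v)(x_*)\le\kappa\,(K*v^{+})(x_*)\le\kappa s<s$, a contradiction. Thus $\phi\le R\Phi_\sigma\le R\sigma^{-1}e^{-(1-l)|x|}$, a first genuine exponential rate obtained \emph{without} any a priori decay estimate.

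Finally I would upgrade this rate to the kernel rate by a single bootstrap through the quadratic term. Since $l<\tfrac12$ gives $2(1-l)>1$, the function $\phi^2$ decays strictly faster than $K$, so $K*\phi^2(x)\le c\,e^{-|x|}$ (the slower, kernel, rate dominates the convolution and no polynomial factor survives). Therefore $\phi\le\Lambda\,K*\phi^2\le C e^{-|x|}$, which is the desired boundedness of $e^{|x|}\phi$. I expect the \emph{main obstacle} to be exactly the point where this argument improves on Bona--Li: producing the first exponential bound with no prior decay rate. This is where Lemma~\ref{lemma:key estimate for solution decay} is essential---it is precisely the statement that $\Phi_\sigma$ is a supersolution, uniformly in $\sigma$---and where the choice $l<\tfrac12$ matters, since it avoids the resonance $2(1-l)=1$ at which the quadratic bootstrap would otherwise lose a factor $(1+|x|)$ and fail to reach the clean rate $e^{-|x|}$.
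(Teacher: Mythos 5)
Your proof is correct, but it follows a genuinely different route from the paper's. The two arguments share their endpoints: both rest on Lemma \ref{lemma:key estimate for solution decay}, both use the decay of $\phi$ to get $\phi^2\le\delta\phi$ outside a large compact set, and both finish with a bootstrap in which $\phi^2$, now decaying strictly faster than the kernel, is convolved with $K$ to reach the full rate $e^{-|x|}$. The middle mechanism, however, differs. The paper works in integral norms: it introduces the regularized weights $h_\e=\frac{e^{l|\cdot|}}{(1+\e e^{|\cdot|})^{\alpha}}\,\phi$, proves $\|h_\e\|_{L^q}$ bounded uniformly in $\e$ by an absorption argument using H\"older, Fubini and Lemma \ref{lemma:key estimate for solution decay} with $m=\alpha q$, lets $\e\to0$ by dominated convergence to get $e^{l|\cdot|}\phi\in L^q(\R)$, and only then converts to pointwise decay via Young's inequality, closing with $e^{|x|}\phi\le\|e^{|\cdot|}K\|_{L^\infty}\|\phi e^{|\cdot|/2}\|_{L^2}^2$. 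You instead linearize the equation into the pointwise inequality $\phi\le C_0e^{-|x|}+\kappa\,K*\phi$ with $\kappa$ small, and run a barrier/comparison argument against the supersolution $R\,\Phi_\sigma$, $\Phi_\sigma(x)=\frac{e^{l|x|}}{1+\sigma e^{|x|}}$, which uses only the $m=1$ case of the Lemma and needs no $L^q$ machinery, no Fubini, and no limiting procedure in the regularization parameter: $\sigma>0$ merely makes the barrier decay, so that a positive supremum of $\phi-R\Phi_\sigma$ would be attained at a finite point, where the contradiction $s\le\kappa s$ follows from $\|K\|_{L^1}=1$. Your quantitative checkpoints are all sound: $B=1/l$ for $l<\tfrac12$, the absorption condition $\kappa<2l$ secured by the choice of $N$, the barrier inequalities $e^{-|x|}\le(1+\sigma)\Phi_\sigma$ and $K*\Phi_\sigma\le\tfrac{B}{2}\Phi_\sigma$, and the non-resonance requirement $2(1-l)>1$ in the bootstrap. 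What each approach buys: yours is more elementary, directly pointwise, and makes the maximum-principle structure visible; the paper's yields along the way the weighted estimates $e^{\alpha|\cdot|}\phi\in L^q(\R)$ for all $\alpha\in(0,1)$ and $q>1$, and stays deliberately parallel to the Bona--Li scheme it aims to streamline, which supports the paper's claim that the method transfers to other dispersive equations.
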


\begin{proof}
We first prove that 
\begin{equation}\label{eq:expo decay in Lp}
e^{\alpha|\cdot|} \phi(\cdot)\in L^{q}(\R)
\end{equation}
for any $\alpha\in(0,1)$ and $q>1$.
Since $e^{\alpha|x|}K(x)\in L^{p}(\R)$ for any $\alpha\in(0,1)$ and $p>0$, we can introduce a constant $C_{\alpha,p}$ given by
	\begin{equation*}
		C_{\alpha,p}:=3(2c-M)^{-1}\|e^{\alpha|\cdot|}K(\cdot)\|_{L_{p}(\R)},
	\end{equation*} 
where  $p$ is chosen to be the conjugate of $q$, i.e., $\frac{1}{p}+\frac{1}{q}=1$.
	By \eqref{eq:steady DP} and H\"{o}lder's inequality, we have
\beq\label{eq:phi kernel control}
\phi=\frac{3}{2c-\phi}\int_{\R}\left[K(x-y)e^{\alpha|x-y|}\right] \frac{\phi^2(y)}{e^{\alpha|x-y|}}\dd y\leq C_{\alpha,p}\left(\int_{\R}\frac{|\phi^2(y)|^{q}}{e^{\alpha q|x-y|}}\dd y \right)^{\frac{1}{q}}.
	\eeq
	Let $l\in [0,\alpha)$ and define
	\beq
		h_{\e}(x):=\frac{e^{l|x|}}{(1+\epsilon e^{|x|})^{\alpha}}\phi(x)
\eeq
for small $\e\in (0,1)$.
Then, for each fixed $\e\in (0,1)$, the function $h_{\e}$ is bounded in $L_{q}(\R)$ by the choice of $l$ and boundedness of $\phi$.  We now prove that $\{h_\e\mid \e\in(0,1)\}$ is uniformly bounded in $ L_{q}(\R)$, which then implies that $\lim_{\e\to 0} h_\e = e^{l|x|}\phi$ belongs to $L_q(\R)$ by dominated convergence and confirms \eqref{eq:expo decay in Lp}.

Since  $\phi$ tends to zero as $|x|\to \infty$, the quadratic nonlinearity guarantees that for every $\delta>0$ there exists  a constant $R_{\delta}>1$ such that
	\begin{equation*}
		|\phi^2(x)|\leq \delta|\phi(x)|\qquad \mbox{for}\quad |x|\geq R_{\delta}.
	\end{equation*}
Since
\begin{equation}\label{spitting h}
\|h_\e\|^q_{L_q(\R)}= \int_\R \left|h_\e(x)\right|^q \dd x \leq C+ \int_{|x|\geq R_\delta} \left|h_\e(x)\right|^q \dd x,
\end{equation}
where $C=C(R_\delta)>0$ is a constant independent of $\e$,
it suffices to study the last integral on the right-hand side of \eqref{spitting h}. 
	
Let $r\in (0,q)$. By \eqref{eq:phi kernel control} and H\"older's inequality, we have 
\begin{align*}
&\int_{|x|\geq R_\delta}|h_{\e}(x)|^{q}\dd x \leq\int_{|x|\geq R_\delta}|h_{\e}(x)|^{q-r}\bigg(\frac{e^{l|x|}}{(1+\epsilon e^{|x|})^{\alpha}}\bigg)^{r}|\phi(x)|^{r}\dd x\\
&\qquad\leq \int_{|x|\geq R_\delta}|h_{\e}(x)|^{q-r}\bigg(\frac{e^{l|x|}}{(1+\epsilon e^{|x|})^{\alpha}}\bigg)^{r} C^{r}_{\alpha,p}\bigg(\int_{\R}\frac{|\phi^2(y)|^{q}}{e^{\alpha q|x-y|}}\dd y\bigg)^{\frac{r}{q}}\dd x\\
&\qquad\leq C^{r}_{\alpha,p}\bigg[\int_{|x|\geq R_\delta}|h_{\e}(x)|^{q}\dd x\bigg]^{\frac{q-r}{q}}\bigg[\int_{|x|\geq R_\delta}\frac{e^{lq|x|}}{(1+\epsilon e^{|x|})^{\alpha q}}\left(\int_{\R} \frac{|\phi^2(y)|^{q}}{e^{\alpha q|x-y|}}\dd y\right)\dd x\bigg]^{\frac{r}{q}}.
\end{align*}
Dividing both sides of the inequality by $\left[\int_{|x|\geq R_\delta}|h_{\e}(x)|^{q}\dd x\right]^{\frac{q-r}{q}}$, we find that\footnote{Note that the term we are dividing by vanishes if and only if $\phi=0$ everywhere in $\{|x|\geq R_\delta\}$, in which case the lemma is obviously true. }
\begin{equation}\label{comb 1}
\int_{|x|\geq R_\delta}|h_{\e}(x)|^{q}\dd x\leq C^{q}_{\alpha,p} \int_{|x|\geq R_\delta}\frac{e^{lq|x|}}{(1+\epsilon e^{|x|})^{\alpha}}\left(\int_{\R} \frac{|\phi^2(y)|^{q}}{e^{\alpha q|x-y|}}\dd y\right)\dd x=:C^{q}_{\alpha,p} T.
\end{equation}
By Fubini's theorem and Lemma \ref{lemma:key estimate for solution decay}, we obtain that
	\begin{align}\label{comb 2}
		\begin{split}
T&=\int_{\R}|\phi^2(y)|^{q}\bigg[\int_{|x|\geq R_\delta}\frac{e^{lq|x|}}{(1+\epsilon e^{|x|})^{\alpha q}e^{\alpha q|x-y|}}\dd x\bigg]\dd y\\
&\leq \int_{|y|\geq R_\delta}|\phi^2(y)|^{q}\frac{B e^{lq|y|}}{(1+\epsilon e^{|y|})^{\alpha q} }\dd y+\int_{|y|<R_\delta}|\phi^2(y)|^{q}\int_{|x|\geq R_\delta}\frac{e^{lq|x|}}{(1+\epsilon e^{|x|})^{\alpha q}e^{\alpha q|x-y|}}\dd x\dd y,
\end{split}
\end{align}
where $B=B(l,q,\alpha)>0$ does not depend on $\e$. Since  $0<l<\alpha $,   the last integral in \eqref{comb 2} is bounded by a constant $C_1$ which depends on $l, \alpha, q, \|\phi\|_\infty$ and $R_{\delta}$ but is independent of $\e$.
Combining \eqref{comb 1}, \eqref{comb 2} and in view  that $|\phi^2(y)|<\delta|\phi(y)|$ for all $|y|\geq R_\delta$, we have
\begin{equation}\label{part 1 bound}
\int_{|x|\geq R_\delta}|h_{\e}(x)|^{q} \dd x\leq C_{\alpha,p}^q \left[\delta ^qB\int_{|x|\geq R_\delta}|h_{\e}(x)|^{q}\dd x+C_1\right] .
\end{equation}
For $\delta$ small enough so that $C_{\alpha,p}^q\delta^q B<\frac{1}{2}$ , \eqref{part 1 bound} implies that
	\begin{equation*}
		\int_{|x|\geq R_\delta}|h_{\e}(x)|^{q}dx\leq C_{2},
	\end{equation*}
	where $C_2=C_{2}(l,\alpha, p, \|\phi\|_\infty,R_{\delta})>0$ is a constant which does not rely on $\e$. 
	
Hence, we have shown that
\[\int_\R |h_{\e}(x)|^{q}dx\lesssim 1.\]
Letting $\e\to 0$, then the dominated convergence theorem ensures that
\begin{equation*}
\int_\R e^{lq |x|}|\phi(x)|^{q}dx\lesssim  1,
\end{equation*}
which implies in particular $x\mapsto e^{l |x|}f(x)\in L_{q}(\R)$ for $q=\frac{p}{p-1}$ and $l\in[0,\alpha)$, and therefore confirms \eqref{eq:expo decay in Lp}. 
	
We now prove that solitary waves decay exponentially by by using \eqref{eq:expo decay in Lp} and Young's inequality in the steady DP equation \eqref{eq:steady DP}:
\beq
e^{\alpha |x|}\phi(x)\lesssim \frac{3}{2c-M}\left[ \left(e^{\alpha |\cdot|K(\cdot)}\right)*\left(e^{\alpha |\cdot|}\phi^2(\cdot)\right)\right](x)\in L^{\infty}(\R)
\eeq
for any $\alpha\in(0,1)$. With this decay estimate, we can use the structure of the DP equation to improve the decay rate to cover the case $\alpha=1$ so that $\phi$ decays at least as good as the kernel $K$. In fact, we have
\[
\begin{split}
e^{|x|}\phi(x)\leq \frac{1}{2c-M}\int_{\R}K(x-y)e^{|x-y|}\left(\phi(y)e^{\frac{|y|}{2}}\right)^2dy\leq \|e^{|\cdot|}K(\cdot)\|_{L^\infty}\|\phi e^{\frac{|\cdot|}{2}}\|_{L^2}^2<\infty.
\end{split}
\]
The above shows that $\phi(x)$ decays as fast as $e^{-|x|}$ at infinity. The fact that the image $e^{|x|}\phi(x)$ forms a simply connected set follows  from the continuity of $\phi$.
\end{proof}

\begin{remark}
This argument  with exponential type convolution estimate is expected to work also for the Whitham and other dispersive equations in proper nonlocal formulation, where a kernel with exponential decay is convoluted with a superlinear nonlinearity.  
\end{remark}

\section{Symmetry and one-crest structure of solitary waves}\label{sect:symmetry and one crest}
With the decay estimates above, we are ready to prove the symmetry for solitary waves to \eqref{eq:DP nonlocal}.    Note that Arnesen recently studied  in \cite{arnesen2018non} the Degasperis-Procesi equation in the nonlocal formulation and proved that traveling waves have wave speed $c$ as the upper bounded. In the following, we will prove that both solitary waves with height smaller than $c$ and solitary waves of the maximum height $c$ are symmetric and have a unique crest, which in particular implies that a peaked solitary wave with decay only has one peak.
For symmetry of solitary waves with height  smaller than the wave speed $c$, our proof follows the idea in \cite{bruell2017symmetry} for the Whitham equation, where the key observation is that the nonlocal operator $L$ behaves as an elliptic operator and there exists a touching lemma on half-plane. This touching lemma  plays the role as the maximum principle for elliptic equations. It is worth to mention that the way to remove the monotonicity assumption on  solitary waves in \cite{bruell2017symmetry} when using the method of moving planes  is inspired by the work of Chen, Li and Ou \cite{CLO} for symmetry of solutions to a class of integral equations induced by fractional Laplacian, although the idea of using Klevin type transform in the latter fails to work in \cite{bruell2017symmetry} due to inhomogeneity of the kernel function for the steady Whitham equation.

\medskip

For solitary waves of the maximum height (see \cite{EW}), i.e., $\sup_{x\in\R}\phi(x)=c$, the argument in \cite{bruell2017symmetry} unfortunately fails to confirm the symmetry. It seems that there exists no argument for confirming the symmetry of a solitary wave with wave speed as its height up to now\footnote{Note that the peaked or cusped waves defined  and found in \cite{lenells2005traveling} are only locally symmetric near a peaked or cusped point.}, so we put forward an argument here for waves of the maximum height and expect it to be effective also for symmetry issues of highest waves of other equations, like the Whitham in \cite{bruell2017symmetry}.
We first introduce the notion of supersolution and subsolution of the steady DP equation.
A solution $\phi$ to the steady Degasperis-Procesi equation \eqref{eq:steady DP} is called a \emph{supersolution} if 
\[\frac{\phi}{3}(2c-\phi)\geq K*\phi^2\]
and a \emph{subsolution} if the inequality above is replaced by $\leq$. With the supersolution and subsolution, we prove the following touching lemma, which can be intuitively explained as: if a supersolution stays above a subsolution on a half plane $(\lambda,\infty)$ for some $\lambda\in\R$, then the supersolution never touches the subsolution at any finite point unless they are equal on the whole half plane $(\lambda,\infty)$ .

\begin{lem}[Touching lemma on a half plane]\label{touching lemma} Let $\phi_1$ and $\phi_2$ be a supersolution and a subsolution of the steady Degasperis-Procesi equation \eqref{eq:steady DP} on a subset  $[\lambda,\infty)\subset \R$, respectively, such that $\phi_1\geq \phi_2$ on  $[\lambda,\infty)$ and $(\phi_1^2-\phi_2^2)(x)=-(\phi_1^2-\phi_2^2)(2\lambda-x)$. Then either
	\begin{itemize}
		\item $\phi_1=\phi_2$ in $[\lambda,\infty)$, or
		\item $\phi_1>\phi_2$ with $\phi_1+\phi_2<2c$ in $(\lambda,\infty)$ .
	\end{itemize}
\end{lem}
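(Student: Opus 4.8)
The plan is to work with the difference $w:=\phi_1-\phi_2\geq 0$ and to exploit both the factorisation of the nonlinearity and the reflection antisymmetry of $\phi_1^2-\phi_2^2$. First I would subtract the subsolution inequality for $\phi_2$ from the supersolution inequality for $\phi_1$. Since $L\phi^2=K*\phi^2$, and since $\phi_1(2c-\phi_1)-\phi_2(2c-\phi_2)=w\,(2c-\phi_1-\phi_2)$, this yields for every $x\in[\lambda,\infty)$
\[
\tfrac{1}{3}\,w(x)\bigl(2c-\phi_1(x)-\phi_2(x)\bigr)\ \geq\ K*(\phi_1^2-\phi_2^2)(x).
\]
The crucial structural point, which is what makes the highest-wave case tractable, is that the degenerate coefficient $2c-\phi_1-\phi_2$ (which vanishes exactly where both waves attain the maximal height $c$) is kept as a \emph{factor} and never divided out.

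Next I would rewrite the convolution on the right using the hypothesis $(\phi_1^2-\phi_2^2)(x)=-(\phi_1^2-\phi_2^2)(2\lambda-x)$. Setting $g:=\phi_1^2-\phi_2^2$ and reflecting the portion of $K*g$ over $(-\infty,\lambda)$ about the point $\lambda$, the antisymmetry of $g$ supplies a minus sign and collapses the integral onto the half-line,
\[
K*g(x)=\int_{\lambda}^{\infty}\bigl[K(x-y)-K(x+y-2\lambda)\bigr]\,g(y)\,\dd y .
\]
Because $K(x)=\tfrac12 e^{-|x|}$ is even and strictly decreasing in $|x|$, and because $|x-y|<(x-\lambda)+(y-\lambda)=|x+y-2\lambda|$ whenever $x,y>\lambda$, the reflected kernel $K(x-y)-K(x+y-2\lambda)$ is strictly positive for $x,y>\lambda$. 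Using $g=w\,(\phi_1+\phi_2)\geq 0$ on $[\lambda,\infty)$ (here the positivity $\phi_1\geq\phi_2\geq 0$ from the solitary-wave range of Lemma~\ref{lem: bound for solitary waves} enters), the whole right-hand side is nonnegative, so that $w(x)\bigl(2c-\phi_1(x)-\phi_2(x)\bigr)\geq 0$ for all $x>\lambda$; in particular $2c-\phi_1-\phi_2\geq 0$ at every point where $w>0$.

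Finally I would run the strong-maximum-principle dichotomy. Assume the first alternative fails, i.e. $w\not\equiv 0$, and argue by contradiction that the strict inequalities of the second alternative hold. If there were an interior point $x_0>\lambda$ at which \emph{either} $w(x_0)=0$ \emph{or} $(\phi_1+\phi_2)(x_0)=2c$, then the left-hand side of the displayed inequality would vanish at $x_0$, forcing
\[
\int_{\lambda}^{\infty}\bigl[K(x_0-y)-K(x_0+y-2\lambda)\bigr]\,g(y)\,\dd y=0 .
\]
Since the kernel is strictly positive on $\{y>\lambda\}$ and $g\geq 0$ is continuous, this can hold only if $g\equiv 0$ on $(\lambda,\infty)$; with $\phi_1\geq\phi_2\geq 0$ this gives $w\equiv 0$ on $[\lambda,\infty)$, contradicting the standing assumption. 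Hence $w>0$ and $\phi_1+\phi_2<2c$ throughout $(\lambda,\infty)$.

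I expect the genuine obstacle to be precisely the degeneracy of the coefficient $2c-\phi_1-\phi_2$ at the maximal height: a naive moving-planes argument divides the convolution term by this coefficient and breaks down as $\phi_1+\phi_2\to 2c$. The device above avoids that division by transferring the degeneracy into the vanishing of a strictly-positive-kernel integral, which lets a single strong-maximum-principle step deliver \emph{both} the strict positivity of $w$ and the strict bound $\phi_1+\phi_2<2c$ at once. The only other point requiring care is the sign of $g$, which is why the a priori positivity of solitary waves is invoked.
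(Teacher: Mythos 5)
Your proof is correct and follows essentially the same route as the paper's: subtract the sub/supersolution inequalities, factor the nonlinearity as $(\phi_1-\phi_2)(2c-\phi_1-\phi_2)$, collapse the convolution onto the half-line via the antisymmetry of $\phi_1^2-\phi_2^2$, and use the strict positivity of $K(x-y)-K(x+y-2\lambda)$ for $x,y>\lambda$ to obtain the dichotomy. The only cosmetic difference is that you make explicit the nonnegativity of $\phi_1^2-\phi_2^2$ on the half-line (via the positivity of solitary waves), a point the paper's proof uses implicitly.
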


\begin{proof}
	In view of its symmetry and monotonicity,  $K$ acts as a positive convolution operator on  functions which is odd with respect to $\lambda$ and does not change sign on the half line $[\lambda,\infty)$. In fact, let $f\geq 0$ on $[\lambda,\infty)$, $f(x)=-f(2\lambda-x)$. Then
	\begin{align*}
		K*f(x)&=\int_{\lambda}^{\infty}K(y)f(x-y) \dd y +\int_{-\infty}^\lambda K(x-y)f(y) \dd y \\
		&=\int_{\lambda}^{\infty}K(x-y)f(y) \dd y +\int^{\infty}_\lambda K(x+y-2\lambda)f(2\lambda-y) \dd y\\
		&= \int_{\lambda}^{\infty}(K(x-y)-K(x+y-2\lambda))f(y)\dd y,
	\end{align*}
	where last equality holds due to $f$ being odd with respect to $\lambda$. 
For $x,y>\lambda$, we have
	\begin{equation}\label{difference between variable}
	(x+y-2\lambda)-|x-y|= 2\min\{x-\lambda,y-\lambda\}> 0
	\end{equation}
Therefore, in view of $K$ being an even function and monotonically decreasing on $(0,\infty)$, we have
	\begin{equation}\label{negative of kernel difference}
	K(x-y)-K(x+y-2\lambda)>0
	\end{equation}
	so that
	\begin{equation*}
		K*f(x)\geq 0 \qquad \mbox{for all }\quad x\geq \lambda.
	\end{equation*}
	In particular, the strict positivity of $K$ implies that either  $K*f>0$ or $f\equiv 0$ on $(\lambda,\infty)$. 
	As a consequence, for the super-and sub-solution $\phi_{1}$ and $\phi_{2}$ in this lemma, we have
	\begin{align*}
		(2c-(\phi_1+\phi_2))(\phi_1-\phi_2)\geq 3K*(\phi_1^2-\phi_2^2)>0
	\end{align*}
	for all $x>\lambda$ unless $\phi_1=\phi_2$ on $[\lambda,\infty)$. The lemma then follows directly.
\end{proof}

We now use the method of moving planes to prove the symmetry and one-crest structure of the wave profile. The first step  is to prove that solitary waves $\phi(x)$ satisfy the following \emph{strict overlay property} in Lemma \ref{able to reflect} below, which means that there exists $\lambda\in\R$ so that for each $x>\lambda$  the reflection  of $\phi(x)$ with respect to $\lambda$ stays strictly above  the value of $\phi$ at the reflection point $2\lambda-x$, i.e., $\phi(x)>\phi(2\lambda-x)$. For convenience, we define the open sets
\[\Sigma_\lambda :=\{x\in \R \mid x> \lambda \}\quad \mbox{and}\quad \Sigma_\lambda ^-:=\{x\in \Sigma_\lambda  \mid \phi(x)<\phi_\lambda(x)\},\]
where $\phi_\lambda(\cdot):= \phi(2\lambda-\cdot)$ is the reflection of $\phi$ about the axis $x=\lambda$.

\begin{lem}[Strict overlay property]\label{able to reflect}
	There exists a $N>0$ suffciently large such that 
	\beq
	\phi(x)>\phi_{\lambda}(x),\quad x>\lambda,
	\eeq
	for any $\lambda \leq-N$. In other words, $\Sigma_\lambda ^-=\emptyset$  for any $\lambda \leq-N$.
\end{lem}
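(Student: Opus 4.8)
The plan is to run the initialization step of the method of moving planes and to exploit the exponential decay from Theorem~\ref{eq:solution exponential decay} in order to beat the full mass $\|K\|_{L^1(\R)}=1$ of the kernel. Since both the steady equation \eqref{eq:steady DP} and the kernel $K$ are invariant under reflection about any axis $x=\lambda$, the reflected profile $\phi_\lambda(x)=\phi(2\lambda-x)$ solves \eqref{eq:steady DP} as well. Writing $w_\lambda:=\phi-\phi_\lambda$ on $\Sigma_\lambda$ and subtracting the two equations, I would obtain the pointwise identity
\[
\tfrac13\bigl(2c-\phi-\phi_\lambda\bigr)(x)\,w_\lambda(x)=\bigl(K*(\phi^2-\phi_\lambda^2)\bigr)(x),\qquad x\in\Sigma_\lambda .
\]
Because $\phi^2-\phi_\lambda^2$ is odd about $\lambda$ and lies in $L^1(\R)$ by the decay, the kernel--reflection computation already carried out in the proof of Lemma~\ref{touching lemma} rewrites the right-hand side as $\int_\lambda^\infty \Phi_\lambda(x,y)(\phi+\phi_\lambda)(y)\,w_\lambda(y)\,\dd y$, where $0<\Phi_\lambda(x,y):=K(x-y)-K(x+y-2\lambda)\le K(x-y)$ for all $x,y>\lambda$.

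The crucial observation is that $\Sigma_\lambda^-$ is confined to the far tail. Indeed, on $\Sigma_\lambda^-$ one has $\phi(x)<\phi_\lambda(x)=\phi(2\lambda-x)$, and since $2\lambda-x<\lambda<0$, the exponential decay $\phi(z)\le C e^{-|z|}$ gives $\phi_\lambda(x)\le C e^{2\lambda}e^{-x}\le C e^{\lambda}$ for every $x>\lambda$. Hence both $\phi$ and $\phi_\lambda$ are bounded on $\Sigma_\lambda^-$ by $C e^{\lambda}$, which is small once $\lambda$ is very negative. This simultaneously (i) keeps the coefficient bounded below, $2c-\phi-\phi_\lambda\ge 2c-2C e^{\lambda}$, uniformly away from $0$, so the argument never sees the crest and works even for the highest wave $M=c$, and (ii) makes the weight $(\phi+\phi_\lambda)(y)\le 2C e^{\lambda}$ small inside the integral. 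Restricting the identity to $x\in\Sigma_\lambda^-$, discarding the nonpositive contribution of $\{w_\lambda\ge0\}$ and using $\Phi_\lambda\le K(x-y)$, I get
\[
\tfrac13\bigl(2c-2C e^{\lambda}\bigr)\,|w_\lambda(x)|\le 2C e^{\lambda}\int_{\Sigma_\lambda^-}K(x-y)\,|w_\lambda(y)|\,\dd y,\qquad x\in\Sigma_\lambda^- .
\]

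To close the estimate I would integrate in $x$ over $\Sigma_\lambda^-$ (legitimate since $|w_\lambda|\le\phi+\phi_\lambda\in L^1(\R)$) and apply Fubini together with $\int_{\Sigma_\lambda^-}K(x-y)\,\dd x\le\|K\|_{L^1(\R)}=1$, which yields
\[
\tfrac13\bigl(2c-2C e^{\lambda}\bigr)\,\|w_\lambda\|_{L^1(\Sigma_\lambda^-)}\le 2C e^{\lambda}\,\|w_\lambda\|_{L^1(\Sigma_\lambda^-)} .
\]
Choosing $N$ so large that $2c-8C e^{\lambda}>0$, i.e.\ $e^{\lambda}<c/(4C)$, for all $\lambda\le-N$ forces $\|w_\lambda\|_{L^1(\Sigma_\lambda^-)}=0$; and since $\Sigma_\lambda^-$ is open by continuity of $\phi$, it must be empty. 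The main obstacle is not any single estimate but the structural insight that, without decay, the bare kernel mass $1$ cannot be beaten: it is precisely the exponential smallness of $\phi$ on the reflected tail $\{2\lambda-x<\lambda\}$ that both confines $\Sigma_\lambda^-$ away from the crest and supplies the small factor $C e^{\lambda}$, which is what makes the initialization succeed uniformly and, notably, without any \emph{a priori} monotonicity hypothesis on $\phi$.
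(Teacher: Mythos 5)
Your proof is correct, and at its core it is the same moving-plane initialization as the paper's: both arguments use that $\phi_\lambda$ solves \eqref{eq:steady DP}, fold the kernel about $x=\lambda$ via the oddness of $\phi^2-\phi_\lambda^2$, discard the favorably signed contribution of $\Sigma_\lambda\setminus\Sigma_\lambda^-$, exploit the key fact that on $\Sigma_\lambda^-$ both $\phi$ and $\phi_\lambda$ are uniformly small (since $\phi_\lambda(x)=\phi(2\lambda-x)$ samples $\phi$ far to the left of $-N$), and then conclude by absorption. The differences are in the bookkeeping, and they are worth recording. First, you work with the factored identity $\tfrac13(2c-\phi-\phi_\lambda)w_\lambda=K*(\phi^2-\phi_\lambda^2)$, whereas the paper keeps $2c(\phi_\lambda-\phi)$ on the left and carries $\phi_\lambda^2-\phi^2$ as an additional local term on the right; your version therefore only needs to beat the kernel mass $\|K\|_{L^{1}(\R)}=1$, while the paper's constant is $\|K\|_{L^{1}(\R)}+1$ --- immaterial, since the decay-induced smallness beats any fixed constant. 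The price of factoring is that you must bound $2c-\phi-\phi_\lambda$ from below on $\Sigma_\lambda^-$, which your tail estimate $\phi,\phi_\lambda\leq Ce^{\lambda}$ supplies; the paper's rearrangement never divides by this coefficient, so it needs no such bound, and for the same reason Lemma \ref{able to reflect} in the paper also applies verbatim to the highest wave --- a robustness your remark about $M=c$ makes explicit rather than implicit. Second, you close the estimate in $L^{1}(\Sigma_\lambda^-)$ via Fubini--Tonelli, which requires $\phi\in L^{1}(\R)$ (available from Theorem \ref{eq:solution exponential decay}), whereas the paper closes in $L^{\infty}(\Sigma_\lambda^-)$ and needs only boundedness of $\phi$ together with mere decay at infinity; either norm works because the bound $\int_{\R}K\,\dd x=1$ can be applied in $x$ or in $y$ indifferently. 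Both proofs finish identically: the chosen norm of $w_\lambda$ on $\Sigma_\lambda^-$ is dominated by a small multiple of itself, hence vanishes, and since $w_\lambda<0$ on the open set $\Sigma_\lambda^-$, that set must be empty.
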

\begin{proof}
	Note that $\phi_{\lambda}(x)$ is also a solution to the steady Degasperis-Procesi equation in nonlocal formulation \eqref{eq:steady DP} if $\phi(x)$ does.  Therefore, we deduce from \eqref{eq:steady DP} that
	\beq\label{eq:diff phi philambda}
	\begin{split}
		&2c(\phi_\lambda(x)-\phi(x))\\
		&=3\left(\int_{\Sigma_{\lambda}\backslash \Sigma_{\lambda}^{-}}+\int_{\Sigma_{\lambda}^{-}}\right)\big(K(x-y)-K(2\lambda -x-y)\big)(\phi_\lambda^{2
		}(y)-\phi^2(y))\dd y \\
		&\quad +\phi_\lambda^{2}(x)-\phi^2(x).
	\end{split}
	\eeq
For  $x\in \Sigma_{\lambda}^{-}$, we use \eqref{difference between variable} and find that the integral over $\Sigma_{\lambda}\backslash\Sigma_{\lambda}^{-}$ on the right side of \eqref{eq:diff phi philambda} is negative so that
	\beq\label{eq: estimate on sigmalambdaminus}
	\begin{split}
		&2c(\phi_\lambda(x)-\phi(x))\\
		&\leq 3\int_{\Sigma_{\lambda}^{-}}\big(K(x-y)-K(2\lambda -x-y)\big)(\phi_\lambda^{2
		}(y)-\phi^2(y))\dd y  +\phi_\lambda^{2}(x)-\phi^2(x).
	\end{split}
	\eeq
	Moreover, Theorem \ref{eq:solution exponential decay} implies that for any small $\epsilon>0$, we can choose suffciently large $N$ such that 
	\beq
	\phi(x)< \phi_{\lambda}(x)<\epsilon, \quad x\in\Sigma_\lambda^-
	\eeq
	for any $\lambda<-N$.
	Then by taking the $L^{\infty}$-norm on both side of \eqref{eq:diff phi philambda} over $\Sigma_\lambda^-$ and using Lemma \ref{touching lemma}, we have
	\begin{equation}\label{lqnorm}
		\begin{split}
			\left\|\phi_\lambda-\phi\right\|_{L^\infty({\Sigma^{-}_{\lambda}})}&\leq \frac{3}{2c}\|\phi+\phi_{\lambda}\|_{L^{\infty}\left({\Sigma_\lambda^-}\right)}\left( \|K\|_{L_{1}({\R})}+1\right)\left\|\phi_\lambda-\phi\right\|_{L_{\infty}({\Sigma^{-}_{\lambda}})}\\
			&\leq \frac{3\epsilon}{c}\left( \|K\|_{L_{1}({\R})}+1\right)\left\|\phi_\lambda-\phi\right\|_{L_{\infty}({\Sigma^{-}_{\lambda}})}\\
		\end{split}
	\end{equation}
	where $\left(\Sigma_\lambda^-\right)^*$ is the reflection of $\Sigma_\lambda^-$ about the plane $x=\lambda$. By choosing $\epsilon<\frac{c}{6(\|K\|_{L_{1}({\R})}+1)}$, we get a contradiction in \eqref{lqnorm} unless $\|\phi-\phi_\lambda\|_{L_\infty({\Sigma^{-}_{\lambda}})}=0$  for $\lambda\leq-N$. As a consequence $\Sigma^{-}_{\lambda}$ must be of measure zero. Since  $\Sigma^{-}_{\lambda}$ is open, we deduce that $\Sigma^{-}_{\lambda}$ is empty for $\lambda\leq-N$.
\end{proof}

\subsection{Solitary waves below the maximum height}\label{sect: symmetry below maximal height}
We are now ready to prove that  solitary waves are symmetric and have exactly one crest at the symmetric axis. The method is similar as that for the Whitham equation in \cite{bruell2017symmetry} but we give full details  for the proof here and write it in a way  to better indicate the obstacle for the case of highest solitary waves.

\begin{thm}
	\label{Symmetry_of_traveling_waves}
Let $\phi$ be a solitary solution to the steady Degasperis-Procesi equation \eqref{eq:steady DP} with $\phi(x)<c$. Then, there exists a unique $\lambda_0\in\R$ such that $\phi$ is symmetric about  $x=\lambda_0$ and $\phi$ is strictly monotonic on each side of the symmetric axis $x=\lambda_0$.
\end{thm}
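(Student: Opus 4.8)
The plan is to run the method of moving planes, with Lemma~\ref{able to reflect} supplying the initial configuration and Lemma~\ref{touching lemma} playing the role of the maximum principle. Set
\[
\lambda_0:=\sup\{\lambda\in\R\mid \Sigma_\mu^-=\emptyset\ \text{for all}\ \mu\le\lambda\}.
\]
By Lemma~\ref{able to reflect} the admissible set contains $(-\infty,-N]$, so $\lambda_0$ is well defined, and I first check that $\lambda_0<\infty$. The decay of $\phi$ together with Lemma~\ref{lem: bound for solitary waves} forces $M=\sup\phi$ to be attained at some finite $x_0$; if $\Sigma_\lambda^-=\emptyset$ held for arbitrarily large $\lambda>x_0$, then testing the overlay inequality $\phi(x)\ge\phi(2\lambda-x)$ at $x=2\lambda-x_0\in\Sigma_\lambda$ would give $\phi(2\lambda-x_0)\ge\phi(x_0)=M$, which is impossible once $2\lambda-x_0$ is so large that $\phi<M$ there by Theorem~\ref{eq:solution exponential decay}. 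Hence $\lambda_0$ is finite, and for every $\mu<\lambda_0$ one has $\Sigma_\mu^-=\emptyset$; passing to the limit $\mu\uparrow\lambda_0$ and using continuity of $\phi$ yields $\phi\ge\phi_{\lambda_0}$ on $\Sigma_{\lambda_0}$, i.e. $\Sigma_{\lambda_0}^-=\emptyset$.

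Next I would apply Lemma~\ref{touching lemma} with the supersolution $\phi_1=\phi$ and the subsolution $\phi_2=\phi_{\lambda_0}$; both solve \eqref{eq:steady DP}, the ordering $\phi\ge\phi_{\lambda_0}$ on $[\lambda_0,\infty)$ has just been established, and $\phi^2-\phi_{\lambda_0}^2$ is odd about $\lambda_0$, so all hypotheses hold. The dichotomy then gives either $\phi\equiv\phi_{\lambda_0}$ on $[\lambda_0,\infty)$, which is exactly the desired symmetry about $x=\lambda_0$, or the strict alternative $\phi>\phi_{\lambda_0}$ with $\phi+\phi_{\lambda_0}<2c$ on $(\lambda_0,\infty)$. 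The entire argument therefore reduces to excluding this strict alternative.

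I expect this continuation step to be the main obstacle. One argues against the maximality of $\lambda_0$, showing that strictness at $\lambda_0$ propagates to $\Sigma_{\lambda_0+\delta}^-=\emptyset$ for some small $\delta>0$. The engine is the $L^\infty$ contraction of Lemma~\ref{able to reflect}: on $\Sigma_\lambda^-$ the steady equation \eqref{eq:steady DP} gives
\[
(2c-\phi-\phi_\lambda)(\phi_\lambda-\phi)\le 3\int_{\Sigma_\lambda^-}\bigl(K(x-y)-K(2\lambda-x-y)\bigr)(\phi_\lambda^2-\phi^2)(y)\,\dd y,
\]
and the hypothesis $\phi<c$ is decisive here: since $M<c$, the coefficient satisfies $2c-\phi-\phi_\lambda\ge 2(c-M)>0$ uniformly, so the estimate closes into a genuine contraction for $\|\phi_\lambda-\phi\|_{L^\infty(\Sigma_\lambda^-)}$ as soon as the right-hand side carries a sufficiently small factor. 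I would produce that factor by splitting $\Sigma_\lambda^-$ for $\lambda$ near $\lambda_0$: on the far tail $\{|x|\ge R\}$ the exponential decay of Theorem~\ref{eq:solution exponential decay} makes $\phi+\phi_\lambda$ uniformly small for all such $\lambda$, while on the bounded complement the strict gap $\phi>\phi_{\lambda_0}$ and continuity in $\lambda$ keep $\Sigma_\lambda^-$ empty. The essential role of the restriction $\phi<c$ is precisely that it keeps $2c-\phi-\phi_\lambda$ away from zero; when $M=c$ this factor degenerates at the crest and the contraction breaks down, which is the obstruction treated separately in Section~\ref{sect: symmetry for maximal height}.

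Once symmetry about $\lambda_0$ is established, monotonicity follows from the strict overlay for every $\lambda<\lambda_0$. For such $\lambda$ Lemma~\ref{touching lemma} gives either $\phi\equiv\phi_\lambda$, which is impossible for a nontrivial decaying wave since two distinct symmetry axes would force periodicity, or $\phi(x)>\phi(2\lambda-x)$ for all $x>\lambda$. Taking any $x_1<x_2<\lambda_0$ and setting $\lambda=\tfrac12(x_1+x_2)$ then yields $\phi(x_2)>\phi(x_1)$, so $\phi$ is strictly increasing on $(-\infty,\lambda_0)$ and, by symmetry, strictly decreasing on $(\lambda_0,\infty)$. The same non-periodicity remark gives uniqueness of $\lambda_0$, completing the plan.
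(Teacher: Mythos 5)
Your skeleton is the same as the paper's: Lemma \ref{able to reflect} to start the plane, Lemma \ref{touching lemma} as the maximum principle, and a final $L^\infty$ contraction; moreover your treatment of the finiteness of $\lambda_0$, of strict monotonicity via midpoint reflection, and of uniqueness via the two-axes/periodicity remark is correct (and cleaner than what the paper writes). The genuine gap is in the step you yourself flag as the main obstacle, namely excluding the strict alternative $\phi>\phi_{\lambda_0}$ on $(\lambda_0,\infty)$. Your mechanism --- far tail where $\phi+\phi_\lambda$ is small, bounded complement where ``the strict gap $\phi>\phi_{\lambda_0}$ and continuity in $\lambda$ keep $\Sigma_\lambda^-$ empty'' --- breaks down near the axis $x=\lambda$. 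The gap $\phi(x)-\phi_{\lambda_0}(x)$ tends to $0$ as $x\to\lambda_0^{+}$, so continuity in $\lambda$ gives no control on $(\lambda,\lambda_0+\eta)$; and this near-axis region genuinely lies in $\Sigma_\lambda^-$. Indeed, under the strict alternative the plane cannot move past $\lambda_0$, which forces $\lambda_0$ to be a crest (if $\phi$ kept increasing through $\lambda_0$, your own compactness-plus-tail argument would move the plane further, contradicting maximality); but then for $\lambda\in(\lambda_0,\lambda_0+\delta)$ and $x\in(\lambda,\,2\lambda-\lambda_0]$, both $x$ and its reflection $2\lambda-x$ lie in the region just past $\lambda_0$ where $\phi$ decreases, with $2\lambda-x<x$, so $\phi_\lambda(x)=\phi(2\lambda-x)>\phi(x)$. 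Hence $(\lambda,\,2\lambda-\lambda_0]\subset\Sigma_\lambda^-$: the set hugs the axis, exactly the configuration depicted in Figure \ref{fig:reflection}.

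On this near-axis piece $\phi+\phi_\lambda$ is close to $2M$ with $M=\sup_{x\in\R}\phi$, not small, so the contraction coefficient your estimate produces is of order $\frac{3\|K\|_{L^1(\R)}\cdot 2M}{2(c-M)}=\frac{3M}{c-M}$, which is $<1$ only when $M<c/4$; for waves of moderate height there is no contradiction and the argument stalls. The paper closes this step with a different source of smallness: it shows that for $\lambda\in(\lambda_0,\lambda_0+\delta)$ the set $\Sigma_\lambda^-$ is a single interval of measure $|\Sigma_\lambda^-|<\epsilon$, and then bounds the convolution over $\Sigma_\lambda^-$ by $|\Sigma_\lambda^-|\,\|K\|_{L^\infty(\Sigma_\lambda^-)}$ rather than by $\|K\|_{L^1(\R)}$ (see \eqref{eq: on sigma lambda one}); combined with $2c-c_\lambda\geq 2\bigl(c-\|\phi\|_{L^\infty(\R)}\bigr)>0$ --- the only place where the hypothesis $\phi<c$ enters, exactly as you anticipated --- this yields \eqref{contradication below maximum height} and a contradiction once $\epsilon<\frac{c-\|\phi\|_{L^\infty(\R)}}{6\|\phi\|_{L^\infty(\R)}}$. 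So the repair is concrete: replace ``emptiness of the bounded part of $\Sigma_\lambda^-$'' by ``smallness of the measure of $\Sigma_\lambda^-$ times $\|K\|_{L^\infty}$''; with that substitution the rest of your plan goes through.
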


\begin{proof}
According to Lemma \ref{able to reflect}, there exist $N>0$ such that  $\Sigma_\lambda^-$ is empty for all $\lambda<-N$.
We now move the axis $x=\lambda$ from  $\lambda=-N$ to the right and it is clear that $\Sigma_\lambda^-$ remains empty unless $x=\lambda$ reaches a local maximum of $\phi$, or  there exists $x_0>\lambda$ such that the the reflection image of $\phi$ on the left side of   $x=\lambda$ touches the wave profile on the right side of $x=\lambda$ at  $x_0$, namely $\phi(2\lambda-x_0)=\phi(x_0)$. However, the touching lemma \ref{touching lemma} will exclude the latter case. In fact, if we assume that the latter case happens and the procedure stops at $x=\lambda_0 $ so that $\phi(x)\geq \phi_{\lambda_0}(x)$ with the equality holding for the first time  at $x=x_0>\lambda_0$, but $\phi(x)$ do not match $\phi_{\lambda_0}(x)$ exactly for $x>\lambda_0$. By taking $\phi$ and $\phi_{\lambda}$ as the supersolution and subsolution, respectively, and using Lemma \ref{touching lemma}, we find that $\phi(x)>\phi_{\lambda_0}(x)$ for all $x>\lambda_0$ and a contradiction appears. So, the above process only stops at $x=\lambda_0$, where $\phi$ reaches its local maximum for the first time. 

We now show that $\phi$ is symmetric with respect to $x=\lambda_0$ so that this local maximum of $\phi$ at $\lambda_0$ is just the unique crest. We now assume  $\phi$  be asymmetric with respect to $x=\lambda_0$, and seek a contradiction. First of all, the touching lemma \ref{touching lemma} excludes the possibility for $\phi(x)\equiv\phi(\lambda_0)$ to hold on  $[\lambda_0, \lambda_0+\delta]$ for any small $\delta>0$. Also,  the above process indicates that $\phi$ is strictly increasing on $(-\infty,\lambda_0)$. Then, for any $\epsilon>0$, we  can choose $\delta>0$ sufficiently small such that    $\Sigma_{\lambda}^{-}$  will be simply connected and its size $|\Sigma_{\lambda}^{-}|<\epsilon$ for $\lambda\in (\lambda_0,\lambda_0+\delta)$. For a fixed $\lambda\in (\lambda_0,\lambda_0+\delta)$,  it is clear that  $2\lambda-\lambda_0\in \Sigma_{\lambda}^{-}$. Since  $\phi$ is below the maximum height $c$, we have
\beq
\phi(x)\leq \|\phi\|_{L^{\infty}(\R)}< c
\eeq
for $x\in \Sigma_{\lambda}^{-}$ and
\beq\label{constant c1}
c_{\lambda}:=\sup_{x\in  \Sigma_{\lambda}^{-} }[\phi(x)+\phi_{\lambda}(x)]<2\|\phi\|_{L^{\infty}(\R)}<2c.
\eeq
Then, by simple connectedness of $\Sigma_{\lambda}^{-}$,  we restrict \eqref{eq:diff phi philambda} on $\Sigma_{\lambda}^{-}$ and get
\begin{equation}\label{eq: on sigma lambda one}
(2c-c_{\lambda})(\phi_{\lambda}-\phi)(x)
\leq  3\int_{\Sigma_{\lambda}^{-}}\left[K(x-y)-K(2\lambda -x-y)\right](\phi_\lambda^{2
}(y)-\phi^2(y))\dd y .
\end{equation}
In view of \eqref{constant c1}, we take  $L^{\infty}$-norm over $\Sigma_{\lambda}^{-}$ on both sides of \eqref{eq: on sigma lambda one}, and  get
\begin{equation}\label{contradication below maximum height}
\begin{split}
\|\phi_{\lambda}-\phi\|_{L^{\infty}(\Sigma^{-}_{\lambda})}&< \frac{3|\Sigma_{\lambda}^{-}|}{2c-c_{\lambda}}\|K\|_{L^{\infty}(\Sigma_{\lambda}^{-})}\|\phi_{\lambda}+\phi\|_{L_{\infty}(\Sigma^{-}_{\lambda})}\|\phi_{\lambda}-\phi\|_{L^{\infty}(\Sigma^{-}_{\lambda})}\\
&<\frac{3\epsilon \|\phi\|_{L^{\infty}(\R)} }{c-\|\phi\|_{L^{\infty}(\R)}}\|\phi_{\lambda}-\phi\|_{L^{\infty}(\Sigma^{-}_{\lambda})}\\
\end{split}
\end{equation}
which leads a contradiction if we choose $\epsilon<\frac{c-\|\phi\|_{L^{\infty}(\R)}}{6\|\phi\|_{L^{\infty}(\R)}}$. Therefore $\phi(x)$ matches $\phi_{\lambda_0}(x)$ for all $x\in\Sigma_{\lambda_0}$, i.e., $\phi$ is symmetric with respect to $x=\lambda_0$. In addition, the above process of moving the $x=\lambda$ from far left to $x=\lambda_0$ also guarantees that $\phi$ has a unique crest located at $x=\lambda_0$ and is monotonic on each side of this symmetry axis.	
\end{proof}

\subsection{Solitary waves of the maximum height}\label{sect: symmetry for maximal height} For soliary waves whose crest reaches the maximum height $c$, the term $c_{\lambda}$ can be very close to $2c$  so that $2c-c_{\lambda}$ may be comparable with (or much smaller than) $\epsilon$ and makes \eqref{contradication below maximum height} fail to lead to a contradiction.  In order to get around this difficulty, we have to study the delicate structure of \eqref{eq: on sigma lambda one}. We now explain the idea to get around this difficulty. Suppose that we push $x=\lambda$ from far left  to the right on the real line and the set $\Sigma_{\lambda}^{-}$ remains empty until $x=\lambda$ meet a crest of the wave profile at $\lambda_0$. For $\lambda$ to be slightly larger than $\lambda_0$, the factor $2c-c_{\lambda}$ could be very small but  $|\Sigma_{\lambda}^{-}|$ is also  small. Then for $x,y\in \Sigma_{\lambda}^{-}$, a new and key observation is that the difference  $|2\lambda-x-y|-|x-y|$ satisfies
\begin{equation}\label{eq:distance variable}
 \big||2\lambda-x-y|-|x-y|\big|= 2\min\{x-\lambda,y-\lambda\}\leq 2|\Sigma_{\lambda}^{-}|
\end{equation}
and  is also small.  Therefore, the term $K(x-y)-K(2\lambda -x-y)$ contributes extra smallness which may be used to control the smallness from $2c-c_{\lambda}$.  
\medskip

In the idea above, the size of $2c-c_{\lambda}$ relies on the structure of the wave profile $\phi$ near the crest at $\lambda_0$. 
It is indicated by \cite{lenells2005traveling} and \cite{arnesen2018non} that the wave profile $\phi$ will become non-smooth and a peak or cusp may form at the crest when wave height reach the wave speed $c$. So, it is reasonable to assume that a highest solitary wave $\phi$ is non-smooth at the crest, but we will give a method which work for different non-smooth structures (peak or cusp) near the crest. Without loss of generality, we assume that the crest for the highest solitary wave is located at $x=\lambda_0$ and its local structure is characterized by
\beq\label{eq:structure of peak}
c-\phi(x)\in [C_1|x-\lambda_0|^{\alpha}, C_{2}|x-\lambda_0|^{\alpha}]
\eeq
for $\alpha\in(0,1]$ and some constants $C_1,C_2>0$ when $x$ is very close to $\lambda_0$. In this way,  the argument below can be adapted to treat the symmetry issues of steady solutions  with other H\"{o}lder regularity at the  crest\footnote{It is expected that the type of non-smoothness at the crest of the solitary wave will be the same as that for the convolutional kernel $K(\cdot)$, see the peaked solitary wave for DP in \cite{lenells2005traveling} and  cusped periodic waves for the Whitham in \cite{EW}.}.
\begin{figure}
	\resizebox{12cm}{4cm}{
	\begin{tikzpicture}
	\draw  (-1.5,0) -- (3.9,0);
	\draw [very thin, densely dotted]  (1.2,0) -- (1.2,1.5);
	\draw [very thin, densely dotted]  (1.6,0) -- (1.6,1.131);
	\node at (1.2,-0.3) {${\scriptscriptstyle\lambda}$};
	\draw (-1.5,0.3)to [out=-5,in=230](-0,0.5)to [out=50,in=180](0.8,1.12)to [out=0,in=230](3.1,0.6)to [out=50,in=150](3.9,0.5);
	\draw [densely dashed](1.2,1.05)to [out=20,in=180](1.6,1.12)to [out=0,in=130](2.4,0.5)to [out=310,in=185](3.9,0.3);
	\node at (-1,0.5) {${\scriptscriptstyle\phi}$};
	\draw [very thin, densely dotted] (2.5,0)--(2.5,0.42);
	\draw [very thin, densely dotted] (0.8,0)--(0.8,1.12);
	\node at (0.8,-0.3) {${\scriptscriptstyle \lambda_0}$};
	\node at (2.6,-0.3)  {${\scriptscriptstyle\lambda_1}$};
	\draw[fill] (0.8,0) circle [radius=0.03];
	\draw[fill] (1.6,0) circle [radius=0.03];
	\draw[fill] (2.5,0) circle [radius=0.03];
	\draw[fill] (1.2,0) circle [radius=0.03];
	\node at (1.8,-0.3)  {${\scriptscriptstyle 2\lambda-\lambda_0}$};
	\node at (2.1,0.15) {${\scriptscriptstyle x}$};
	\draw[fill] (2,0) circle [radius=0.03];
	\draw [very thin, densely dotted] (2,0)--(2,0.96);
	\draw [very thin, densely dotted] (2,0.96)--(0.4,0.96);
	\draw [very thin, densely dotted] (0.4,0.96)--(0.4,0);
	\draw[fill] (0.4,0) circle [radius=0.03];
	\node at (0.22,-0.3) {${\scriptscriptstyle 2\lambda-x}$};
	\draw[fill] (0.4,0.96) circle [radius=0.03];
	\node at (-0.1,1.1) {${\scriptscriptstyle \phi(2\lambda-x)}$};
	\draw[fill] (2,0.96) circle [radius=0.03];
	\node at (2.3,1.1) {${\scriptscriptstyle \phi_{\lambda}(x)}$};
	\node at (3.4,0.37) {${\scriptscriptstyle \phi_{\lambda}}$};
	\end{tikzpicture}
}
	\caption{\small{Assume that the procedure of pushing $x=\lambda$ from left to right stops for the first time at a local maximum of $\phi$ at $x=\lambda_0$.  Then, the reflection axis $x=\lambda$ is pushed slightly to the right side of $x=\lambda_0$, which generates a non-empty set $\Sigma_{\lambda}^{-}$ denoted by the  interval $(\lambda,\lambda_1)$. $\phi_{\lambda}$ as the reflection of $\phi$ is partially depicted by dashed lines. For a point $x\in \Sigma_{\lambda}^{-}$, its reflection  $2\lambda-x$ is also dipicted.}}\label{fig:reflection}
\end{figure}
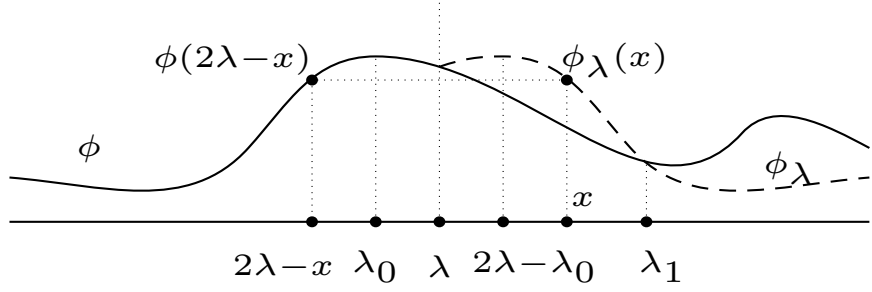
\begin{thm}
	\label{Symmetry_of_traveling_waves II}
There exists a finite $\lambda_0\in\R$ such that the highest solitary solution $\phi$ to the steady Degasperis-Procesi equation is symmetric about $x=\lambda_0$ where the crest is located. Moreover,
 $\phi$ is strictly monotonic on each side of the symmetric axis.
\end{thm}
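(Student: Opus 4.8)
The plan is to run the method of moving planes exactly as in the proof of Theorem \ref{Symmetry_of_traveling_waves} and to isolate the single place where the height $\sup\phi=c$ creates trouble. Starting from Lemma \ref{able to reflect} I would push the reflection axis $x=\lambda$ from $-\infty$ to the right; as before the process first stops at some finite $\lambda_0$ which, by the touching Lemma \ref{touching lemma} (which excludes the touching scenario), must be a local maximum of $\phi$, i.e. the crest, now sitting at height $c$. Assuming $\phi$ is \emph{not} symmetric about $\lambda_0$, Lemma \ref{touching lemma} upgrades $\phi\ge\phi_{\lambda_0}$ to the strict statement $\phi>\phi_{\lambda_0}$ together with $\phi+\phi_{\lambda_0}<2c$ on $(\lambda_0,\infty)$; combined with the exponential decay from Theorem \ref{eq:solution exponential decay}, this confines $\Sigma_\lambda^-$ to a small, simply connected interval $(\lambda,\lambda_1)$ for $\lambda=\lambda_0+\eta$ with $\eta>0$ small, exactly as depicted in Figure \ref{fig:reflection}.

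The difference from Theorem \ref{Symmetry_of_traveling_waves} is that the coefficient $2c-c_\lambda$ in \eqref{eq: on sigma lambda one} now degenerates to $0$ as $\eta\to 0$, so the crude estimate leading to \eqref{contradication below maximum height} no longer closes. My plan is to quantify both the degeneracy and the compensating smallness of the kernel through the local profile \eqref{eq:structure of peak}. On the one hand, since every $x\in\Sigma_\lambda^-$ satisfies $x-\lambda_0>\eta$, \eqref{eq:structure of peak} yields the lower bound
\[
2c-c_\lambda=\inf_{x\in\Sigma_\lambda^-}\big[(c-\phi(x))+(c-\phi_\lambda(x))\big]\ge C_1\eta^{\alpha}.
\]
On the other hand, writing $K(x)=\tfrac12 e^{-|x|}$ and using \eqref{eq:distance variable} together with the mean value theorem, the kernel difference obeys $0\le K(x-y)-K(2\lambda-x-y)\le \min\{x-\lambda,\,y-\lambda\}\le y-\lambda$ for $x,y\in\Sigma_\lambda^-$. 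Inserting this into \eqref{eq: on sigma lambda one}, bounding $(\phi_\lambda^2-\phi^2)(y)\le c_\lambda\,\|\phi_\lambda-\phi\|_{L^\infty(\Sigma_\lambda^-)}$, and integrating over the interval $\Sigma_\lambda^-$, I would arrive at
\[
(2c-c_\lambda)\,\|\phi_\lambda-\phi\|_{L^\infty(\Sigma_\lambda^-)}\le \tfrac{3}{2}\,c_\lambda\,|\Sigma_\lambda^-|^{2}\,\|\phi_\lambda-\phi\|_{L^\infty(\Sigma_\lambda^-)}.
\]
Cancelling the common factor (nonzero since $\Sigma_\lambda^-\neq\emptyset$) and using $c_\lambda<2c$ gives $2c-c_\lambda\le 3c\,|\Sigma_\lambda^-|^{2}$.

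The contradiction then comes from comparing the two rates: the last inequality together with $2c-c_\lambda\ge C_1\eta^\alpha$ forces $|\Sigma_\lambda^-|^{2}\gtrsim\eta^{\alpha}$, whereas I expect \eqref{eq:structure of peak} to pin the reflected crest (where $\phi_\lambda=c$) at $x=2\lambda-\lambda_0$ and to control the width $|\Sigma_\lambda^-|$ in terms of $\eta$ tightly enough that $|\Sigma_\lambda^-|^{2}\ll\eta^{\alpha}$ for all small $\eta$, an impossibility. Since $\alpha\in(0,1]$, the exponent $\alpha/2$ is strictly below $1$, so already an estimate of the form $|\Sigma_\lambda^-|\lesssim\eta$ would suffice. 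Once $\Sigma_\lambda^-$ is shown to be empty for all small $\eta>0$, the assumed asymmetry is contradicted, so $\phi$ is symmetric about $\lambda_0$, and the strict monotonicity on each side follows from the moving-plane process exactly as in Theorem \ref{Symmetry_of_traveling_waves}.

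The hard part, and the genuine novelty over the subcritical case, is precisely this rate matching: proving that the extra smallness $|\Sigma_\lambda^-|^{2}$ harvested from the near-cancellation of the two translates of $K$ strictly dominates the vanishing coefficient $2c-c_\lambda\sim\eta^{\alpha}$. This hinges on an honest estimate of the width $|\Sigma_\lambda^-|$ from the one-sided H\"older behavior \eqref{eq:structure of peak}; because that bound carries a gap between $C_1$ and $C_2$, a uniform use of $\min\{x-\lambda,y-\lambda\}\le|\Sigma_\lambda^-|$ may be too lossy, and I anticipate needing the sharper pointwise form of the kernel bound, evaluated at the point where $\phi_\lambda-\phi$ is maximal (which I expect to lie within $O(\eta)$ of $\lambda$, near the reflected crest). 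It is here that the peak/cusp exponent $\alpha\le 1$ enters decisively.
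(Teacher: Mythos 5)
Your setup (moving planes, stopping at a crest $\lambda_0$ of height $c$, the strict statement $\phi>\phi_{\lambda_0}$ from Lemma \ref{touching lemma}, the reduction to a small simply connected $\Sigma_\lambda^-=(\lambda,\lambda_1)$) agrees with the paper, and your individual estimates are correct; but the step you yourself flag as ``the hard part'' is not a deferred technicality --- it is the entire difficulty, and the route you propose for it cannot be completed. Your decoupled estimate, infimum of the coefficient versus integrated kernel smallness, yields $C_1\eta^{\alpha}\leq 3c\,|\Sigma_\lambda^-|^{2}$, which produces a contradiction only if one proves a width bound $|\Sigma_\lambda^-|\ll\eta^{\alpha/2}$. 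No such bound follows from \eqref{eq:structure of peak}: the width of $\Sigma_\lambda^-$ is governed not by the H\"older structure at the crest but by the modulus of asymmetry of $\phi-\phi_{\lambda_0}$ on $(\lambda_0,\infty)$, which the hypotheses leave completely free. Concretely, take $\alpha=1$ and a candidate profile with $c-\phi(\lambda_0+s)=Cs$ and $c-\phi(\lambda_0-s)=Cs(1+s^{\beta})$ for small $s>0$ and some $\beta\geq 1$; this is consistent with \eqref{eq:structure of peak}, satisfies $\phi>\phi_{\lambda_0}$ on the right (so the plane does stop at $\lambda_0$ and the profile is exactly the kind of asymmetric candidate the argument must exclude), and a direct computation gives $\Sigma_\lambda^-=(\lambda,\lambda_0+s^*)$ with $s^*\sim(2\eta)^{1/(1+\beta)}$. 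Then $|\Sigma_\lambda^-|^{2}\sim\eta^{2/(1+\beta)}$ with $2/(1+\beta)\leq 1=\alpha$, so your inequality $C_1\eta^{\alpha}\leq 3c|\Sigma_\lambda^-|^{2}$ holds for all small $\eta$ and no contradiction arises. The rate matching you hope for is therefore not merely unproven; it is false in general.

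The paper closes this gap by never decoupling the two small quantities: both are kept pointwise in $x$ and cancelled at the \emph{same} point. For $x\in\Sigma_\lambda^-$, \eqref{eq: key estimate highest 1} gives $2c-(\phi+\phi_\lambda)(x)\geq C_1|x-\lambda|^{\alpha}$ (only the right-hand branch $c-\phi(x)\geq C_1(x-\lambda_0)^{\alpha}\geq C_1(x-\lambda)^{\alpha}$ is needed), while \eqref{eq:diff for K} gives $K(x-y)-K(2\lambda-x-y)\leq 2(x-\lambda)$ uniformly in $y\in\Sigma_\lambda^-$. Dividing these at the same $x$ leaves the bounded factor $(x-\lambda)^{1-\alpha}\leq|\Sigma_\lambda^-|^{1-\alpha}$, and the $y$-integration contributes one more factor $|\Sigma_\lambda^-|$, which is exactly \eqref{eq: contradiction estimate}:
\begin{equation*}
\phi_\lambda(x)-\phi(x)\leq 12c\,C_1^{-1}\,|\Sigma_\lambda^-|^{2-\alpha}\,\|\phi_\lambda-\phi\|_{L^{\infty}(\Sigma_\lambda^-)}.
\end{equation*}
A contradiction follows as soon as $|\Sigma_\lambda^-|$ is below a fixed constant depending only on $c$, $C_1$, $\alpha$; that is, only the qualitative fact $|\Sigma_\lambda^-|\to 0$ as $\lambda\downarrow\lambda_0$ is used, with no rate relative to $\eta$ whatsoever --- which is why the argument survives arbitrarily weak asymmetry, where your version fails. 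Your closing remark about applying the sharper pointwise kernel bound at the maximizer of $\phi_\lambda-\phi$ is precisely the germ of this argument, and if developed it reproduces the paper's proof; but as written, your main line of attack rests on a width estimate that cannot hold, so the proposal does not constitute a proof.
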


\begin{proof}
First of all, we  can prove  similarly as in Theorem \ref{Symmetry_of_traveling_waves} that we are able to push $x=\lambda$ from far left to right until it stops at $x=\lambda_0$ where a crest of $\phi$ is located. If $\phi(\lambda_0)<c$, then the proof reduces to the case for waves under the maximum height as in Theorem \ref{Symmetry_of_traveling_waves}. So, we can assume that $\phi(\lambda_0)=c$. As before,  the touching lemma \ref{touching lemma} excludes the possibility for $\phi(x)\equiv\phi(\lambda_0)$ to hold on  $[\lambda_0, \lambda_0+\delta]$ for any small $\delta>0$. Also, for any $\epsilon>0$, we  can choose $\delta>0$ sufficiently small such that    $\Sigma_{\lambda}^{-}$  will be simply connected and its size satisfies $|\Sigma_{\lambda}^{-}|<\epsilon$ for $\lambda\in (\lambda_0,\lambda_0+\delta)$. As in Figure \ref{fig:reflection}, for a fixed  $\lambda\in (\lambda_0,\lambda_0+\delta)$, we denote  $\Sigma_{\lambda}^{-}$ by $(\lambda,\lambda_1)$ and define 
\beq
\delta_1:=\lambda-\lambda_0,\quad \delta_2:=\lambda_1-\lambda.
\eeq
Note that $\delta_2$ can be very small if $\delta_1$ is chosen small enough, and in particular, $\delta_2$ approaches $0$ as $\delta_1$ does. Then, from \eqref{eq:distance variable} and the property of  kernel $K$, we have
\beq\label{eq:diff for K}
0<K(x-y)-K(2\lambda-x-y)\leq 2(x-\lambda),\quad x,y\in \Sigma_{\lambda}^{-}.
\eeq
  The key observation is for estimate of the term $2c-\phi(x)-\phi_{\lambda}(x)$, $x\in\Sigma_{\lambda}^{-}$ as follows: For sufficiently small $\delta_1$ and  any $x\in \Sigma_{\lambda}^{-}$, we use \eqref{eq:structure of peak} and get
\begin{equation}\label{eq: key estimate highest 1}
\begin{split}
2c-(\phi(x)+\phi_{\lambda}(x))&=\phi(\lambda_0)-\phi(x)+\phi(\lambda_0)-\phi(2\lambda-x)\\
&\geq C_1\left[(x-\lambda_0)^{\alpha}+[(2\lambda-\lambda_0)-x]^{\alpha}\right]\\
&\geq C_1|x-\lambda|^{\alpha},
\end{split}
\end{equation}
where the first inequality can be well-illustrated by
\[
c-\phi_{\lambda}(x)=\phi(\lambda_0)-\phi(2\lambda-x)=\phi_{\lambda}(2\lambda-\lambda_0)-\phi_{\lambda}(x)
\]
and the following fact in Figure \ref{fig:reflection} where the distance between $x$ and $2\lambda-\lambda_0$ is the same as the distance between $2\lambda-x$ and $\lambda_0$.
Therefore, for any $x\in\Sigma_{\lambda}^{-}$, we use \eqref{eq:diff for K} and \eqref{eq: key estimate highest 1} to get
\begin{equation}\label{eq: contradiction estimate}
\begin{split}
&\phi_\lambda(x)-\phi(x)\\
&=\frac{3}{2c-(\phi+\phi_\lambda)}\int_{\Sigma_{\lambda}^{-}}\big(K(x-y)-K(2\lambda -x-y)\big)(\phi_\lambda^{2
	}(y)-\phi^2(y))\dd y \\
&\leq \frac{3}{C_1 |x-\lambda|^{\alpha}}\left[2(x-\lambda) |\Sigma_{\lambda}^{-}|\|\phi+\phi_\lambda\|_{L^{\infty}_{\Sigma_{\lambda}^{-}}}\|\phi_\lambda-\phi\|_{L^{\infty}_{\Sigma_{\lambda}^{-}}}\right]\\
&\leq  12cC_{1}^{-1}\delta_2|x-\lambda|^{1-\alpha}\|\phi_\lambda-\phi\|_{L^{\infty}_{\Sigma_{\lambda}^{-}}}.
\end{split}
\end{equation}
From \eqref{eq: contradiction estimate}, we see clearly that $|x-\lambda|^{1-\alpha}$ is a small quantity with non-negative power $1-\alpha$, which shows that the smallness of $K(x-y)-K(2\lambda-x-y)$ balances the singularity caused by the  term $2c-(\phi+\phi_{\lambda})$ on $\Sigma^{-}_{\lambda}$. Then, by choosing $\delta_1$ sufficiently small, we can make $\delta_2<\epsilon<(\frac{C_{1}}{24})^{\frac{1}{2-\alpha}}$ so that
\[
12cC_{1}^{-1}\delta_2|x-\lambda|^{1-\alpha}\leq 12cC_{1}^{-1}\delta_2^{2-\alpha}<\frac{1}{2}.
\] 
Therefore, we get a contradiction by taking the $L^{\infty}_{\Sigma_{\lambda}^{-}}$ norm on the left side of \eqref{eq: contradiction estimate}, and the lemma is proved.
\end{proof}
\begin{remark}
In the proof for Theorem \ref{Symmetry_of_traveling_waves II}, we used the boundedness of the kernel function. For unbounded kernel which may appear in other equations like the Whitham equation, it is expected that proper $L^{p}$-norms instead of $L^{\infty}$-norm should be used for \eqref{eq: contradiction estimate}. 
\end{remark}

\section{A new method for symmetric solutions to be traveling waves}\label{sect: symmetry to traveling}
It has been confirmed in \cite{EHR} that classical symmetric solutions must be traveling waves. The idea for the proof in  \cite{EHR} is to construct a traveling wave solution $\bar{u}(t,x)$ which shares the same initial data with a symmetric solution $u(t,x)$, then the uniquess of solutions implies that $\bar{u}(t,x)$ coincide with $u(t,x)$ so that symmetric solutions are traveling waves. However, we hope to understand how the symmetric structure of waves can be connected with the fixed shape and constant propagation speed, which can not be clearly seen from the constructive proof in \cite{EHR}. With this goal, we check carefully the two constraint conditions (see \eqref{eq:steady form}-\eqref{eq:linear} below) and found that they actually contain information for shape of wave profile and wave propagation speed, respectively. This new finding also leads to a new, more straightforward proof for symmetric solutions to be traveing waves as follows. For convenience, we work on  the Degasperis-Procesi equation in nonlocal formulation \eqref{eq:DP nonlocal}.
\begin{thm}
Solutions  to the Degasperis-Procesi equation with \emph{a priori} spatial symmetry are steady solutions. 
\end{thm}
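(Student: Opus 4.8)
The plan is to distill from the assumed spatial symmetry, together with the evolution equation \eqref{eq:DP nonlocal}, exactly two scalar constraints, and to show that the first fixes the shape of the profile while the second forces the propagation speed to be constant. I would encode spatial symmetry by requiring that for each time $t$ there is an axis $x=\lambda(t)$ — for a nontrivial wave this is the crest location and inherits the $C^1$-regularity of the classical solution — such that $u(t,x)=u(t,2\lambda(t)-x)$. Differentiating this identity in $x$ shows that $u_x(t,\cdot)$ is antisymmetric about $\lambda(t)$, while differentiating in $t$ gives
\[
u_t(t,x)=u_t(t,2\lambda(t)-x)+2\dot\lambda(t)\,u_x(t,2\lambda(t)-x).
\]

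The first key ingredient is a parity computation for the nonlocal right-hand side. Since $K(x)=\tfrac12 e^{-|x|}$ is even, the operator $L=K\ast{\cdot}$ maps functions symmetric about $\lambda(t)$ to symmetric functions and commutes with the reflection $x\mapsto 2\lambda(t)-x$; hence both $u\,u_x$ and $\partial_x L(\tfrac32 u^2)$ are antisymmetric about $\lambda(t)$ whenever $u(t,\cdot)$ is symmetric. Reading $u_t=-u\,u_x-\partial_x L(\tfrac32 u^2)$ off \eqref{eq:DP nonlocal}, this makes $u_t(t,\cdot)$ antisymmetric about $\lambda(t)$, i.e.\ $u_t(t,2\lambda-x)=-u_t(t,x)$. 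Substituting this and the antisymmetry of $u_x$ into the differentiated symmetry relation collapses it to the transport identity
\[
u_t=-\dot\lambda(t)\,u_x,
\]
which is the \emph{steady-form} constraint: the profile is merely advected and therefore keeps a fixed shape. Integrating along characteristics yields $u(t,x)=\phi\big(x-\Lambda(t)\big)$ with $\phi:=u(0,\cdot)$ and $\Lambda(t):=\lambda(t)-\lambda(0)$.

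The second step feeds this ansatz back into \eqref{eq:DP nonlocal} to produce the second constraint. Writing $\xi=x-\Lambda(t)$ and using the translation invariance of $L$, every spatial term becomes a fixed function of $\xi$ alone, so the equation separates as
\[
\dot\Lambda(t)\,\phi'(\xi)=\phi(\xi)\phi'(\xi)+\big[\partial_\xi L(\tfrac32\phi^2)\big](\xi)=:g(\xi),
\]
where $g$ does not depend on $t$. Evaluating at any single point $\xi_0$ with $\phi'(\xi_0)\neq0$ (such a point exists for a nontrivial wave) forces $\dot\Lambda(t)=g(\xi_0)/\phi'(\xi_0)$ to be independent of $t$. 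Calling this constant $c$ gives the \emph{linear} relation $\Lambda(t)=ct+\text{const}$, so that $u(t,x)=\phi(x-ct-x_0)$ is a genuine traveling wave (a steady solution), and $\phi$ solves the steady equation \eqref{eq:steady DP with a}. This is the constraint that pins down a constant propagation speed.

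I expect the main obstacle to be the regularity bookkeeping rather than the algebra: one must justify that the symmetry axis $\lambda(t)$ is a well-defined and differentiable function of $t$ — for a wave with a single nondegenerate crest this follows from the implicit function theorem applied to $u_x(t,\lambda(t))=0$, but uniqueness of the axis and its differentiability must be argued — and that the solution is smooth enough in $t$ for $u_t$ to be evaluated pointwise, which is exactly where the hypothesis of \emph{classical} solutions is used. A secondary point to treat carefully is the separation argument: one needs $\phi'\not\equiv0$, and should record that the degenerate case $\phi'\equiv0$ is the constant solution, which is vacuously steady.
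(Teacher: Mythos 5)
Your proposal is correct and follows essentially the same route as the paper: both derive the transport constraint $u_t+\dot\lambda(t)\,u_x=0$ (which fixes the shape, $u(t,x)=\phi(x-\Lambda(t))$) and then use the time-independence of the resulting profile equation, evaluated where $\phi'\neq 0$, to force the propagation speed $\dot\lambda$ to be constant. Your parity argument for the antisymmetry of $u_t$ is just a reorganization of the paper's comparison of the equation at $x$ and at $2\lambda(t)-x$, and your explicit handling of the degenerate case $\phi'\equiv 0$ (and of the regularity of the axis $\lambda(t)$) only makes explicit points the paper leaves implicit.
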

\begin{proof}
Assume that $u(t,x)$ is a solution to the Degasperis-Procesi equation with symmetric axis $x=\lambda(t)$ for some function $\lambda(\cdot)\in C^{1}(\R)$, i.e.,
\beq\label{eq:sym relation}
u(t,x)=u(t,2\lambda(t)-x).
\eeq
Then, the spatial and time derivatives of $u(t,x)$ satify
\beq\label{eq:derivative sym}
u_{t}|_{(t,x)}=(u_{t}+2\dot{\lambda} u_{x})|_{(t,2\lambda-x)},\quad u_{x}|_{(t,x)}=-u_{x}|_{(t,2\lambda-x)}.
\eeq
In addition, we have
\beq\label{eq:nonlocal sym}
\frac{1}{2}\partial_{x}L(u^2)\big|_{(t,x)}
=-\int_{\R}k(y)[uu_{x}](t,2\lambda-x+y)dy
=-L(uu_{x})\big|_{(t,2\lambda-x)},
\eeq
where in the second equality we used the evenness of the kernel $k(\cdot)$.
Inserting \eqref{eq:sym relation}-\eqref{eq:nonlocal sym} into \eqref{eq:DP nonlocal} and in view of the arbitrariness\footnote{The variable $t$ should of course be chosen from an interval where solutions stay in the same function space as the initial datum does.} of $t$ and $x$, we find that $u$ satisfies the following equation
\beq\label{eq:sym eq}
u_{t}+2\dot{\lambda}u_{x}-uu_{x}-3L(uu_{x})=0,
\eeq
where $\dot{\lambda}:=\dot{\lambda}(t)$ denotes the derivative of $\lambda(t)$ with respect to $t$.
The comparison between \eqref{eq:sym eq} and \eqref{eq:DP nonlocal} then leads to the following constraint conditions
\begin{align}
u_{t}+\dot{\lambda}u_{x}&=0,\label{eq:linear}\\
-\dot{\lambda}u_{x} +uu_{x}+3L(uu_{x})&=0.\label{eq:steady form}
\end{align}
A key observation is that  \eqref{eq:linear} is a linear  PDE of first order with  coefficients relying only on the time variable so that $u(t,x)$ must take the form
\beq\label{eq:ansatz}
u(t,x)=g(x-\lambda(t))
\eeq
for some function $g$, which implies that the shape of the solution will no change in later evolution and the solution propagates with speed $\dot{\lambda}(t)$. Inserting \eqref{eq:ansatz} into \eqref{eq:steady form}, we get the following differential equation
\beq\label{eq:steady form ODE}
\left[-\dot{\lambda}(t)g^{\prime} +gg^{\prime}+3L(gg^{\prime})\right]\Big|_{x-\lambda(t)}=0.
\eeq
Choose arbitrarily two pairs $(t_1,x_1), (t_2,x_2)\in \R^{+}\times \R$ (for which the solution exists and makes sense) such that
\beq
x_1-\lambda(t_1)=x_2-\lambda(t_2)=:X
\eeq
Evaluating \eqref{eq:steady form ODE} at these two pairs gives 
\[
(\dot{\lambda}(t_1)-\dot{\lambda}(t_2))g^{\prime}(X)=0.
\]
Due to the arbitrariness of $X$,   $\dot{\lambda}(t)$ has to be a constant so that the wave profile has a constant propagation speed.  Therefore $u(t,x)$, with fixed shape and constant propagation speed, is a traveling wave solution.
\end{proof}

\section*{Aknowledgement}
The author acknowledges the support by grants No. 231668 and  250070 from the Research Council of Norway and would like to thank  Mats Ehrnstrom for very useful discussions. The author is also grateful to  Mathias Arnesen for explaining his results and to  Luc Molinet for reminding me about peaked solitary waves to the Degasperis-Procesi equation.

%
%
%
%
%

\end{document}